\theoremstyle{plain}
\newtheorem{theorem}{Theorem}
\newtheorem{corollary}[theorem]{Corollary}
\newtheorem{lemma}[theorem]{Lemma}
\newtheorem{proposition}[theorem]{Proposition}
\theoremstyle{definition}
\newtheorem{definition}[theorem]{Definition}
\newtheorem{convention}[theorem]{Convention}
\newtheorem{notation}[theorem]{Notation}
\newtheorem*{remark}{Remark}
\newcommand{\N}{{\mathbb N}}
\def\callan{\textsc{Callan}}%
\def\ocallan{\overline{\textsc{Callan}}}%
\def\gfree{\textsc{$\Gamma$-free}}%
\def\ogfree{\overline{\textsc{$\Gamma$-free}}}%
\def\lonesum{\textsc{Lonesum}}%
\def\olonesum{\overline{\textsc{Lonesum}}}%
\def\FL{F_{\lambda}}%
\def\oeta{\widehat{\eta}}%
\def\dumont{\textsc{Dumont}}%
\def\piros#1{{\color{red}#1}}%
\def\kek#1{{\color{blue}#1}}%
\title[Lonesum and $\Gamma$-free $0$-$1$ fillings of Ferrers shapes]%
{Lonesum and $\bold{\Gamma}$-free $\bold{0}$-$\bold{1}$ fillings of Ferrers shapes}
\author{Be\'ata B\'enyi}
\address{\noindent Faculty of Water Sciences, National University of Public Service, Budapest, HUNGARY}
\email{beata.benyi@gmail.com}
\author{G\'abor V.\ Nagy}
\address{\noindent Bolyai Institute, University of Szeged, HUNGARY}
\email{ngaba@math.u-szeged.hu}
\date{\today}
\subjclass[2010]{05A05, 05A19}
\keywords{$0$-$1$ fillings of Ferrers shapes, EW-tableaux, $\Gamma$-free tableaux, Genocchi numbers, Dumont permutations}
\begin{document}
\begin{abstract} 
We show that $\Gamma$-free fillings and lonesum fillings of Ferrers shapes are equinumerous by applying a previously defined bijection on matrices
for this more general case and by constructing a new bijection between Callan sequences and Dumont-like permutations.
As an application, we give a new combinatorial interpretation of Genocchi numbers in terms of Callan sequences.
Further, we recover some of Hetyei's results on alternating acyclic tournaments. Finally, we present an interesting result in the case of certain other special shapes.
\end{abstract}

\maketitle

\section{Introduction}
Given a partition $\lambda=(\lambda_1,\lambda_2,\dots,\lambda_n)$ where $\lambda_1\geq\lambda_2\geq\dots\geq\lambda_n$,
the \emph{Ferrers shape} $\FL$ is an arrangement of cells justified to the left and to the bottom such that
$\FL$ has $n$ rows with $\lambda_i$ cells in the $i^{\text{th}}$ row, from bottom to top ($i=1,\dots,n$). See Figure~\ref{fig_FS}.\par
\begin{figure}[h]
\centering
\includegraphics[scale=1.1]{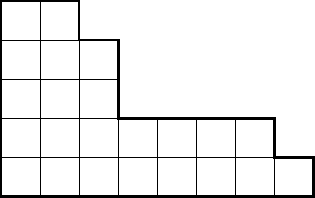}
\caption{The Ferrers shape associated to the partition (8,7,3,3,2)}\label{fig_FS}
\end{figure}
A \emph{Ferrers diagram} (tableau, or $0$-$1$-filling of a Ferrers shape) is an assignment of a $0$ or a $1$ to each of the cells
of a Ferrers shape $\FL$. We call a Ferrers diagram \emph{$\Gamma$-free} if it does not contain $1$'s in positions such that
they form a $\Gamma$-configuration, i.e., two $1$'s in the same row and a third below the left of these in the same column.
%$\Gamma=\begin{smallmatrix}1 &1\\1& *\end{smallmatrix}$.
Figure~\ref{fig_FD} shows a Ferrers diagram (on the left) which is $\Gamma$-free, and an other one
(on the right) which is not $\Gamma$-free.\par
\begin{figure}[h]
\centering
\includegraphics[scale=1.1]{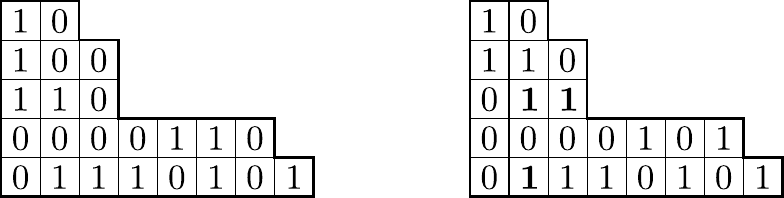}
\caption{A $\Gamma$-free and a non-$\Gamma$-free Ferrers diagram}\label{fig_FD}
\end{figure}
Clearly, we can say that a diagram is $\Gamma$-free if and only if it does not contain any of the submatrices
from the following set:
\begin{align*}
\Gamma=
\left\{\begin{pmatrix}1&1\\1&0\end{pmatrix},
\begin{pmatrix}1&1\\1&1\end{pmatrix}
\right\}.
\end{align*}
In this paper we think of $n\times k$ $0$-$1$ matrices as Ferrers diagrams of rectangular shape, namely, Ferrers diagrams corresponding to 
the partition $(k,k,\dots,k)$ where the number of $k$'s is $n$. So the definitions and theorems concerning Ferrers diagrams apply to $0$-$1$ matrices as well.
$\Gamma$-free diagrams are generalizations of the so-called  $\Gamma$-free matrices that were defined in \cite{FH} and enumerated in \cite{BH1},
showing that the number of $n\times k$ $\Gamma$-free \mbox{$0$-$1$} matrices is the poly-Bernoulli number $B_n^{(-k)}$.
Poly-Bernoulli numbers were introduced by Kaneko \cite{Kaneko} analytically as a generalization of the Bernoulli numbers.
The poly-Bernoulli numbers enumerate several combinatorial objectclasses, as for instance lonesum matrices.
Lonesum matrices are $0$-$1$ matrices uniquely reconstructible by their row and column sum vectors \cite{Brewbaker}.
For a more detailed list of combinatorial objects enumerated by the poly-Bernoulli numbers, see \cite{BH1, BHcombrel}.
In \cite{BNagy} the authors defined a bijection between $\Gamma$-free matrices and the so-called Callan permutations
that are in a simple one-to-one correspondence with lonesum matrices.

In this paper we apply a version of this bijection on $\Gamma$-free diagrams, establishing a bijection between $\Gamma$-free diagrams
and the so-called \emph{lonesum fillings} of diagrams. With that we show that the sets of $\Gamma$-free fillings and lonesum fillings of the same Ferrers shape are equinumerous. 

We call a $0$-$1$ filling of a Ferrers shape \emph{lonesum} if it doesn't contain any of the following two submatrices:

\[\mathcal{F}=\left\{
 \begin{pmatrix} 1 & 0\\0 & 1 \end{pmatrix},
\begin{pmatrix}0 & 1 \\ 1 & 0\end{pmatrix}\right\}.
\]

See Figure~\ref{fig_LS} for an example of lonesum Ferrers diagram. We call the above pair of submatrices also the \emph{flipping pair}. The name is motivated by the fact that the column and row sums of a $0$-$1$ matrix does not change when we exchange any occurance of one of the submatrix of the pair to the other. Moreover, Ryser \cite{Ryser} showed that if two matrices $ A$ and $A^*$ have the same row and column sum vector then $A$ can be transformed to $ A^*$ by a sequence of exchanges between sumbmatrices of the flipping patterns. Similar statements are true for the case of fillings of Ferrers shapes, hence we can state that a lonesum filling of a given shape is uniquely reconstructible from the row and column sum vector. 

Lonesum fillings of Ferrers shapes were already investigated in different forms in the literature. Josuat-Verg\`{e}s \cite{Josuat} refers to them as $X$-diagrams and establishes a bijection between lonesum fillings and Le-tableaux \cite{Postnikov}. On the other hand, there is an obvious bijection between lonesum fillings of a Ferrers shape $F_{\lambda}$ and acyclic orientations of the so-called Ferrers graph, the bipartite graph associated to a given shape $F_{\lambda}$ \cite{Ehrenborg}. The special case of matrices reduces to a one-to-one correspondence between acyclic orientations of complete bipartite graphs and lonesum matrices \cite{Cameron}.
Motivated by the results in \cite{Ehrenborg}, Selig et al.\ defined and studied in \cite{EW} the so-called EW-tableaux and NEW-tableaux that are essentially the same as lonesum fillings of Ferrers shapes.

We study properties of a bijection between $\Gamma$-free and lonesum fillings of Ferrers shapes.  
Further, we consider the special case when the Ferrers shape is a staircase shape. A \emph{staircase shape} is the Ferrers shape associated to the partition $\lambda=(n,n-1,\ldots, 1)$. It follows from the properties of a well-known bijection between Le-tableaux with at least one $1$ in every column and permutations \cite{Steingrimsson} that such Le-tableaux of staircase shape are enumerated by the Genocchi numbers. Based on our bijection we present a new combinatorial interpretation of Genocchi numbers in terms of so-called Callan sequences and prove this result by describing a direct bijection with Dumont permutations that are well-known to be enumerated by the Genocchi numbers. Additionally, using a connection to lonesum fillings of staircase shapes we present a new proof of Hetyei's results on the number of acyclic alternating tournaments, showing that this is given by the Genocchi numbers \cite{Hetyei}.

In the final section we start to study the $0$-$1$ fillings of other types of shapes by showing that permuting the columns of a Ferrers shape does not change
the number of $\Gamma$-free fillings.
  
%Finally, we ask for a combinatorial proof of a beautiful formula that connects the rectangular shapes and triangular shapes and hence poly-Bernoulli and Genocchi numbers, which seems to be a hard problem.  

\section{A bijection between $\Gamma$-free and lonesum Ferrers diagrams}

The main result of this paper is a bijective proof of the fact that the number of $\Gamma$-free $0$-$1$-fillings of a given Ferrers shape
is the same as the number of lonesum $0$-$1$-fillings (see Theorem~\ref{fotetel}). This is done by connecting two former bijective results 
on this subject. The authors of this paper developed a method in~\cite{BNagy} for transforming $\Gamma$-free matrices into Callan sequences,
which quickly yields a bijective encoding of $\Gamma$-free diagrams into special (Callan) sequences; and an other recent paper~\cite{EW} gives
a bijective encoding of lonesum diagrams into special (Dumont) permutations. As a new ingredient, we establish a bijection between
these two intermediate sets of objects, i.e.\ the set of special sequences and the set of special permutations in question (see Lemma~\ref{keylemma}). 
%Now we discuss the details.

\begin{definition}
We call a row or column of a $0$-$1$ matrix or Ferrers diagram \emph{non-zero} if it contains at least one $1$ element.
We say that a $0$-$1$ matrix or Ferrers diagram is \emph{complete}, if all of its \emph{columns} are non-zero.
\end{definition}

\begin{notation}
Given a Ferrers shape $\FL$, we denote by $\gfree(\FL)$ the set of $\Gamma$-free Ferrers diagrams of shape $\FL$, 
and we denote by $\lonesum(\FL)$ the set of lonesum Ferrers diagrams of shape $\FL$, defined in the Introduction.
We denote by $\ogfree(\FL)$ and $\olonesum(\FL)$, respectively, the set of \emph{complete} $\Gamma$-free and lonesum Ferrers diagrams of shape $\FL$.
When $\FL$ is an $n\times k$ matrix, we write ``$n\times k$'' in place of $\FL$ in these notations,
so for example, $\gfree(n\times k)$ denotes the set of $\Gamma$-free $n\times k$ $0$-$1$ matrices, and so on.
\end{notation}

The main result of this paper is a bijective proof of the following theorem.

\begin{theorem}\label{fotetel}
$|\gfree(\FL)|=|\lonesum(\FL)|$, for any given Ferrers shape $\FL$.
\end{theorem}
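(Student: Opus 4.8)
The plan is to prove Theorem~\ref{fotetel} by a chain of bijections, following the roadmap sketched in the introduction: $\gfree(\FL)$ will be put in bijection with a set of ``Callan-type'' sequences, $\lonesum(\FL)$ will be put in bijection with a set of ``Dumont-type'' permutations, and then the combinatorial heart of the matter (Lemma~\ref{keylemma}) will establish a bijection between these two intermediate sets. Since all three maps are bijections, the cardinalities of the two extreme sets coincide. It will be convenient to reduce first to the \emph{complete} case: for an arbitrary Ferrers shape $\FL$, any $\Gamma$-free (resp.\ lonesum) filling is obtained from a complete $\Gamma$-free (resp.\ lonesum) filling of the sub-shape spanned by its non-zero columns by reinserting the all-zero columns, and this reinsertion is independent of the shape's row structure. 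Hence $|\gfree(\FL)| = \sum_{\mu} |\ogfree(F_\mu)|$ and $|\lonesum(\FL)| = \sum_{\mu} |\olonesum(F_\mu)|$, where $\mu$ ranges over the sub-shapes of $\FL$ obtained by deleting a subset of columns; so it suffices to prove $|\ogfree(\FL)| = |\olonesum(\FL)|$ for every shape $\FL$.

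First I would adapt the matrix bijection of~\cite{BNagy} to the Ferrers-shape setting: given a complete $\Gamma$-free diagram of shape $\FL$, I read off, column by column (processed in a suitable order dictated by the shape), the data of where the topmost $1$ in each column sits and how the remaining $1$'s are distributed; the $\Gamma$-free condition translates exactly into a monotonicity/nesting constraint that lets this data be recorded as a Callan-type sequence, i.e.\ an ordered set partition of an appropriate ground set with blocks tagged by rows and columns subject to compatibility with $\FL$. One checks that the map is well-defined, that its image is precisely the set of ``$\FL$-admissible Callan sequences,'' and that it is invertible by reconstructing the diagram column by column. Dually, I would invoke the encoding of lonesum diagrams by Dumont-like permutations essentially as in~\cite{EW}: the EW/NEW-tableau viewpoint shows that a complete lonesum filling of $\FL$ is equivalent to an acyclic orientation of the Ferrers graph with a source/sink normalization, which in turn linearizes to a permutation of the combined row-and-column index set satisfying a Dumont-type descent pattern compatible with $\FL$.

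The main new step — and the one I expect to be the principal obstacle — is Lemma~\ref{keylemma}: constructing a bijection between the set of $\FL$-admissible Callan sequences and the set of $\FL$-admissible Dumont-like permutations. The difficulty is that these two encodings organize the ``same'' information (row labels, column labels, and their relative order) in structurally different ways: the Callan side is an ordered set partition where blocks mix row- and column-labels under a nesting rule, while the Dumont side is a linear order on the labels with a parity/alternation condition on ascents and descents. The plan is to build the correspondence by a careful ``unfolding'' of blocks into runs and a compensating relabeling that tracks, for each column label, the set of row labels it dominates; the block structure on one side becomes the maximal monotone runs on the other. The shape constraint $\FL$ must be carried through both directions, which is the delicate bookkeeping part — I would verify it by induction on the number of columns (or on $|\lambda|$), peeling off the last column and checking that admissibility is preserved in both directions.

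Finally, composing the three bijections yields, for each sub-shape $F_\mu$ of $\FL$, a bijection $\ogfree(F_\mu)\to\olonesum(F_\mu)$, and summing over $\mu$ (equivalently, handling the zero columns uniformly as above) gives a bijection $\gfree(\FL)\to\lonesum(\FL)$, which proves the theorem. A secondary point worth isolating along the way is the analogue of Ryser's theorem for Ferrers shapes, already asserted in the introduction, which guarantees that a lonesum filling is determined by its row and column sums; this is not strictly needed for the counting argument but clarifies why the Dumont-permutation encoding is natural and lossless.
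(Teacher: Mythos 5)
Your proposal follows essentially the same route as the paper: the same reduction to complete fillings by splitting over the set of non-zero columns, the same chain $\ogfree(\FL)\to\ocallan(\FL)$ (adapting the bijection of~\cite{BNagy}) and $\olonesum(\FL)\to\dumont(\FL)$ (via~\cite{EW}), with the new content concentrated in the Callan--Dumont bijection of Lemma~\ref{keylemma}, where your ``blocks become maximal monotone runs'' idea is exactly the mechanism the paper uses. The only simplification you missed is that the shape constraint needs no delicate bookkeeping in that lemma: once the canonical labeling is in place, Lemma~\ref{keylemma} is a purely combinatorial statement about an arbitrary partition of $\{1,\dots,s\}$ into two sets and is entirely independent of $\FL$.
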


We will prove the following variant bijectively.
\begin{theorem}\label{fotetelv}
$\left|\ogfree(\FL)\right|=\left|\olonesum(\FL)\right|$, for any given Ferrers shape $\FL$.
\end{theorem}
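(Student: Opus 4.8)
The plan is to prove Theorem~\ref{fotetelv} by composing three bijections, following the two-intermediate-encodings strategy outlined above. First I would revisit the construction of~\cite{BNagy} that turns a $\Gamma$-free $0$-$1$ matrix into a Callan permutation and check that it is essentially \emph{local}: whether a set of $1$'s forms a forbidden $\Gamma$-configuration depends only on the relative position of the cells involved, so the same recipe can be run cell by cell on a $\Gamma$-free \emph{diagram} of an arbitrary Ferrers shape $\FL$. Carrying this out, and tracking the completeness condition (which should become the requirement that certain column-indexed blocks of the sequence are non-empty), yields a bijection from $\ogfree(\FL)$ onto a set $\mathcal{C}(\FL)$ of Callan-type sequences in which the shape $\FL$ enters only through constraints on which column labels may appear together with which row labels.

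Second, I would invoke the EW-tableaux results of~\cite{EW}. Lonesum fillings of $\FL$ are, up to a trivial change of format, exactly the EW-tableaux of~\cite{EW}, so their encoding gives a bijection from $\olonesum(\FL)$ onto a set $\mathcal{D}(\FL)$ of Dumont-like permutations, again with $\FL$ governing the admissible patterns. After these two steps Theorem~\ref{fotetelv} is reduced to the purely combinatorial identity $|\mathcal{C}(\FL)|=|\mathcal{D}(\FL)|$.

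Third, and this is the genuinely new input (isolated as Lemma~\ref{keylemma}), I would construct a direct bijection between the Callan-type sequences $\mathcal{C}(\FL)$ and the Dumont-like permutations $\mathcal{D}(\FL)$: one reads off from a Callan sequence the incidence data of ``which row is allowed to see which columns'' and repackages it as the excedance/descent structure of a permutation, arranging the construction so that the non-emptiness conditions on the two sides correspond. I expect this to be the main obstacle --- one must verify that the Ferrers-shape constraints on the two intermediate sets are transported into one another, that the output really has the Dumont-like property, and that the map is invertible, so that every Dumont-like permutation adapted to $\FL$ is hit exactly once. Once Lemma~\ref{keylemma} is established, chaining the three bijections gives $|\ogfree(\FL)|=|\mathcal{C}(\FL)|=|\mathcal{D}(\FL)|=|\olonesum(\FL)|$, which is Theorem~\ref{fotetelv}. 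Theorem~\ref{fotetel} then follows by summing this identity over all Ferrers subshapes of $\FL$ obtained by selecting a subset of its columns, since a Ferrers diagram is $\Gamma$-free (resp.\ lonesum) if and only if its restriction to the non-zero columns is a complete $\Gamma$-free (resp.\ lonesum) diagram, and a subset of the columns of a Ferrers shape is again a Ferrers shape.
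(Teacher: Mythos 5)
Your overall architecture is exactly the paper's: encode complete $\Gamma$-free fillings as Callan-type sequences via the matrix bijection of \cite{BNagy}, encode complete lonesum fillings as Dumont-like permutations via \cite{EW}, and close the triangle with a new bijection between the two intermediate classes. The first two steps are correct in outline. (One implementation remark: the paper does not re-run the \cite{BNagy} algorithm ``cell by cell'' on the shape; it embeds $\FL$ in its circumscribed rectangle $\mathrm{Rect}(\FL)$, uses the recorded positions of the top $1$'s to characterize which $(n,k)$-Callan sequences arise from fillings supported inside $\FL$, and then passes to the canonical labelling of the rows and columns along the northeast border. After that translation the shape constraint takes the clean form $\max R_i<\min C_i$ for each pair, and completeness becomes the condition that $\{C_1,\dots,C_m\}$ partitions the set of column labels --- not merely that certain blocks are non-empty.)

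The genuine gap is in your third step. You correctly isolate Lemma~\ref{keylemma} as the new content, but you do not construct the bijection: ``reads off the incidence data and repackages it as the excedance/descent structure of a permutation'' is a placeholder, and it is precisely here that all the work of the theorem lives. What is actually needed is the following. An $\FL$-Dumont permutation splits into maximal blocks whose types alternate between row labels (ascent bottoms, hence increasing within a block) and column labels (descent tops, hence decreasing within a block); the map repeatedly strips the first block $B_1$ together with the initial segment $U$ of the second block consisting of the elements lying above $\max B_1$ (or below $\min B_1$, according to the type of $B_1$), emitting a pair $(R,C)$ with $\max R<\min C$. Proving this is a bijection onto $\ocallan(\FL)$ requires showing that, given the surviving suffix and the last emitted pair $(R_m,C_m)$, exactly one of the two candidate prefixes $R_m^{\nearrow}C_m^{\searrow}$ or $C_m^{\searrow}R_m^{\nearrow}$ can be prepended consistently with the ascent-bottom/descent-top constraints and with the greedy definition of $U$; this is decided by comparing the first surviving element with $\min C_m$, and the case analysis is the crux of the invertibility argument. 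Until such a construction (or some other proof that $|\dumont(\FL)|=|\ocallan(\FL)|$) is supplied, your proposal is a correct reduction of Theorem~\ref{fotetelv} to Lemma~\ref{keylemma}, but not a proof of it.
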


It is easy to see that Theorem~\ref{fotetelv} implies Theorem~\ref{fotetel}. This follows from the facts
\begin{flalign*}
&&\gfree(\FL)&=\mathop{\dot{\bigcup_{\mathcal{I}}}}\gfree_{\mathcal{I}}(\FL)\sim\mathop{\dot{\bigcup_{\mathcal{I}}}}\ogfree(\FL|_{\mathcal{I}}),&\\
\text{and}&&\lonesum(\FL)&=\mathop{\dot{\bigcup_{\mathcal{I}}}}\lonesum_{\mathcal{I}}(\FL)\sim\mathop{\dot{\bigcup_{\mathcal{I}}}}\olonesum(\FL|_{\mathcal{I}}),&
\end{flalign*}
where $\dot{\cup}$ denotes disjoint union,
$\mathcal{I}$ runs over all subsets of columns of $\FL$,
furthermore $\gfree_{\mathcal{I}}(\FL)$ and $\lonesum_{\mathcal{I}}(\FL)$ denote the set of
those $\Gamma$-free/lonesum diagrams of shape $\FL$ in which the set of non-zero columns is precisely $\mathcal{I}$.
Finally, $\FL|_{\mathcal{I}}$ is the Ferrers shape obtained from $\FL$ by deleting all columns not in $\mathcal{I}$,
and the symbol $\sim$ indicates the fact that $\gfree_{\mathcal{I}}(\FL)$ and $\lonesum_{\mathcal{I}}(\FL)$ are in natural bijection with
$\ogfree(\FL|_{\mathcal{I}})$ and $\olonesum(\FL|_{\mathcal{I}})$, respectively, since the diagrams in, say, $\gfree_{\mathcal{I}}(\FL)$
are just complete $\Gamma$-free $0$-$1$ fillings of $\FL|_{\mathcal{I}}$,
and the remaining columns of $\FL$ are filled with $0$'s.

Thus in the rest of this section, we will prove Theorem~\ref{fotetelv} by means of a series of lemmas.
The main tool of the proof has been developed in~\cite{BNagy}. We recall the required definitions and an unusual convention (and extend them to Ferrers shapes) first,
and introduce some new ones.

\begin{definition}%\label{nkCallan}
An \emph{$(n,k)$-Callan sequence} is a sequence $(R_1,C_1),\dots,(R_m,C_m)$ for some $m\in\N_0$ such that $R_1,\dots,R_m$ are pairwise disjoint nonempty subsets of $\{1,\dots,n\}$,
and $C_1,\dots,C_m$ are pairwise disjoint nonempty subsets of $\{1,\dots,k\}$. The set of $(n,k)$-Callan sequences is denoted by $\callan(n,k)$.
\end{definition}

\begin{convention}\label{konv}
Throughout this paper, the rows of a matrix or Ferrers shape are always indexed {\it from bottom to top\/} and the columns are indexed {\it from right to left\/}.
\end{convention}

\begin{definition}
In a $0$-$1$ matrix or Ferrers diagram, we call an element 1 \emph{top-1} if it is the highest $1$ in its column.
For an $n\times k$ $0$-$1$ matrix $M$, the set of positions of top $1$'s is denoted by $\text{top}(M)$, i.e.
$$\mathrm{top}(M):=\{(i,j)\in\{1,\dots,n\}\times\{1,\dots,k\}: \text {$M_{ij}$ is a top-1 in $M$}\},$$
where $M_{ij}$ is the element in the $i^{\text{th}}$ row and $j^{\text{th}}$ column of $M$, with respect to Convention~\ref{konv}.
\end{definition}

Now we can state the aforementioned main lemma.
\begin{lemma}\label{folemma}\cite{BNagy}
There exists a bijection $\phi$ from $\gfree(n\times k)$ to $\callan(n,k)$,
such that for any matrix $M\in\gfree(n\times k)$, if $\phi$ maps $M$ to the Callan sequence $(R_1,C_1),\dots,(R_m,C_m)$, then, following Convention~\ref{konv},
\begin{enumerate}
\item the set of indices of non-zero rows of $M$ is $\cup_{i=1}^m R_i$;
\item the set of indices of non-zero columns of $M$ is $\cup_{i=1}^m C_i$; and
\item 
$$\mathrm{top}(M)=\bigcup_{i=1}^m\left(\{\max R_i\}\times C_i\right),$$
where $\max R_i$ denotes the largest element in $R_i$.
\end{enumerate}
\end{lemma}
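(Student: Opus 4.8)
The plan is to recall the bijection $\phi$ of \cite{BNagy}, adapted to the conventions used here, and then to verify the three listed properties directly. The natural first step is a local reformulation of $\Gamma$-freeness obtained by inspecting the two forbidden submatrices: reading $M$ as in Convention~\ref{konv}, an $n\times k$ $0$-$1$ matrix $M$ is $\Gamma$-free if and only if every $1$ of $M$ is the rightmost $1$ in its row or the bottommost $1$ in its column --- equivalently, no $1$ of $M$ has, at the same time, another $1$ to its right in its row and another $1$ below it in its column. This local condition is the only point at which the precise shape of $\Gamma$ enters.

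Next I would note that properties (1)--(3) already determine the ``skeleton'' of $\phi$: by (3) the blocks $C_i$ must be exactly the classes of non-zero columns grouped according to the row of their top-$1$, and each $\max R_i$ must be the top-$1$ row of the columns in $C_i$; by (1)--(2), $\bigcup_i R_i$ must be the set of non-zero rows of $M$ and $\bigcup_i C_i$ the set of its non-zero columns. So the only data still to be specified are (a) which $R_i$ each non-zero row that is \emph{not} a top-$1$ row belongs to, and (b) the order in which the pairs $(R_i,C_i)$ are listed. Following \cite{BNagy}, both are read off from $M$: using the local condition one checks that every such row is forced into a single, canonically determined $R_i$ (governed by the top-$1$ rows of the columns in which it has a $1$), and that exactly one ordering of the pairs is compatible with the $1$'s of $M$ that lie below the top-$1$'s. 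This produces $\phi(M)\in\callan(n,k)$, and properties (1)--(3) then hold for it by construction.

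For the reverse map I would describe the reconstruction of a $0$-$1$ matrix from a Callan sequence $(R_1,C_1),\dots,(R_m,C_m)$ --- placing $1$'s in the columns of each $C_i$ in the rows prescribed by $R_i$, together with certain lower rows dictated by the other pairs, as in \cite{BNagy} --- and check, once more via the local condition, that the output lies in $\gfree(n\times k)$ and is sent back to the original sequence by $\phi$. The hard part will be exactly this verification --- that the row-assignment in (a) is well-defined, that (b) gives a genuine linear order, and that reconstruction inverts $\phi$ --- i.e., that $\Gamma$-freeness, in the form of the local condition, is precisely what makes the column/row block decomposition exist and be reversible (note that the decomposition is phrased in terms of top-$1$'s while the local condition concerns bottom-$1$'s, so some care is needed here). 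This is the combinatorial substance of \cite{BNagy}, and for the present lemma it is the bulk of the proof; the genuinely new part is merely the verification of (1)--(3).
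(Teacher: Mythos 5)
The paper's own ``proof'' of this lemma is nothing more than a pointer to \cite{BNagy} (the subsections ``Proof of Theorem 2'' and ``Some properties of $\phi$''), so your proposal --- which correctly reformulates $\Gamma$-freeness locally, correctly observes that property (3) together with the disjointness of the $R_i$ forces the column blocks $C_i$ and the values $\max R_i$, and then defers the assignment of the remaining non-zero rows, the ordering of the pairs, and the inverse map to \cite{BNagy} --- follows essentially the same route while supplying somewhat more scaffolding than the paper does. Just be aware that the verification you yourself flag as ``the bulk of the proof'' is carried out only in \cite{BNagy}; neither the paper nor your outline reproduces it, so within this paper the lemma is, in effect, imported rather than proved.
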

\begin{proof}
For the construction of $\phi$, see the subsection `Proof of Theorem 2' in~\cite{BNagy}.
The required conditions (1)-(3) are listed  in the subsection `Some properties of $\phi$' of that paper as properties (i)-(ii).
%We highly recommend to read the referenced paper, but in order to give some insight, we sketch the construction of $\phi$ (every missing detail can be found in~\cite{BNagy}).
%The process of determining the image of a matrix $M\in\gfree(n\times k)$ is visualized in Figure~\ref{fig_FOLEMMA} (where $n=11$ and $k=12$).
%\begin{figure}[h]
%\centering
%\includegraphics[width=\hsize]{phi.pdf}
%\caption{Illustration of $\phi$}\label{fig_FOLEMMA}
%\end{figure}
\end{proof}

We shall use this lemma to obtain an encoding of Ferrers diagrams in Callan sequences. 
For a given partition $\lambda=(\lambda_1,\dots,\lambda_n)$, let $\mathrm{Rect}(\FL)$ denote 
the rectangular shape (matrix) of size $n\times\lambda_1$. In fact, $\mathrm{Rect}(\FL)$ is the
circumscribed rectangular shape of $\FL$, that can be obtained from $\FL$ by adding new cells to the end of its rows,
as shown in Figure~\ref{fig_RECT} where the new cells are colored grey.\par
\begin{figure}[h]
\centering
\includegraphics[scale=1.1]{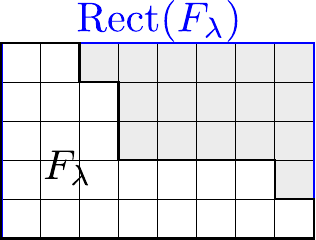}
\caption {$\mathrm{Rect}(\FL)$ and the inscribed $\FL$}\label{fig_RECT}
\end{figure}
It is fairly obvious that in order to obtain the number of $\Gamma$-free Ferrers diagrams of shape $\FL$,
it is equivalent to count those $\Gamma$-free $0$-$1$ fillings of $\mathrm{Rect}(\FL)$ in which all the $1$'s of the matrix
lie inside the inscribed Ferrers shape $\FL$. It is also evident that all the $1$'s lie inside the inscribed $\FL$
if and only if all the top-$1$'s of $\mathrm{Rect}(\FL)$ lie inside $\FL$.
%Combining this with condition~(3) of Lemma~\ref{folemma}, we obtained the following.
By condition~(3) of Lemma~\ref{folemma}, the bijection $\phi$ can be easily restricted to the set of to-be-enumerated $\Gamma$-free matrices,
and we obtain the following Corollary.
\begin{corollary}\label{kov1}
Given a Ferrers shape $\FL$ with $n$ rows and $k$ columns, the followings are true:%for which the size of $\mathrm{Rect}(\FL)$ is $n\times k$.
\smallbreak
\noindent\enspace\emph{(a)}\enspace There exists a bijection $\psi$ from $\gfree(\FL)$ to the set of those $(n,k)$-Callan sequences
$(R_1,C_1),\dots,(R_m,C_m)$
for which all elements of the set
$$\bigcup_{i=1}^m\left(\{\max R_i\}\times C_i\right),$$
when a pair $(i,j)$ is interpreted as the position in the $i^\text{th}$ row and $j^\text{th}$ column of $\mathrm{Rect}(\FL)$ (cf.\ Convention~\ref{konv}),
lie inside the inscribed Ferrers shape $\FL$.\smallbreak
\noindent\enspace\emph{(b)}\enspace Moreover, if $D\in\gfree(\FL)$ is mapped to the Callan sequence $(R'_1,C'_1),\dots,(R'_m,C'_m)$ by $\psi$, then
the set of indices of non-zero rows of $D$ is $\cup_{i=1}^m R'_i$, and the set of indices of non-zero columns of $D$ is $\cup_{i=1}^m C'_i$.
\end{corollary}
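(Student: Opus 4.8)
The plan is to deduce the statement directly from Lemma~\ref{folemma} by restricting the bijection $\phi$ to an appropriate subset of $\gfree(n\times\lambda_1)$. First I would make precise the reduction already sketched in the text: a $\Gamma$-free $0$-$1$ filling of $\FL$ is the same thing as a $\Gamma$-free $0$-$1$ filling of $\mathrm{Rect}(\FL)$ all of whose $1$'s lie inside the inscribed shape $\FL$, and since a column is emptied below its top-$1$ only by $0$'s (never creating new $1$'s), this condition is equivalent to requiring that every top-$1$ of the filling of $\mathrm{Rect}(\FL)$ lie inside $\FL$. Call this subset $\gfree_{\FL}(n\times\lambda_1)\subseteq\gfree(n\times\lambda_1)$; the natural ``forget the grey cells'' map gives a bijection $\gfree(\FL)\to\gfree_{\FL}(n\times\lambda_1)$.

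Next I would apply Lemma~\ref{folemma}. Under the bijection $\phi\colon\gfree(n\times\lambda_1)\to\callan(n,\lambda_1)$, condition~(3) says that the top-$1$ positions of $M$ are exactly $\bigcup_{i=1}^m(\{\max R_i\}\times C_i)$. Hence $M$ lies in $\gfree_{\FL}(n\times\lambda_1)$ if and only if every element of this union, read as a cell of $\mathrm{Rect}(\FL)$ via Convention~\ref{konv}, lies inside $\FL$. Therefore $\phi$ restricts to a bijection from $\gfree_{\FL}(n\times\lambda_1)$ onto the set of $(n,\lambda_1)$-Callan sequences described in part~(a). Composing with the forgetful bijection from the previous paragraph yields the desired $\psi$, which proves~(a) (here $k=\lambda_1$). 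For part~(b), note that the forgetful bijection does not change which rows or columns are non-zero, so the non-zero rows and columns of $D$ coincide with those of the corresponding $M\in\gfree_{\FL}(n\times\lambda_1)$; then conditions~(1) and~(2) of Lemma~\ref{folemma} give that these index sets are $\bigcup_{i=1}^m R'_i$ and $\bigcup_{i=1}^m C'_i$, respectively.

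There is essentially no obstacle here — this is a bookkeeping corollary — but the one point that needs a careful word is the claim that ``all $1$'s inside $\FL$'' is equivalent to ``all top-$1$'s inside $\FL$''. The forward direction is trivial; for the converse one uses that $\FL$ is a Ferrers shape, so if the highest $1$ of a column sits in a row that is part of $\FL$ in that column, then every lower cell of that column is also part of $\FL$ (the rows of $\FL$ are nested from the bottom), and thus every $1$ of that column lies inside $\FL$. I would state this observation explicitly before invoking it, and otherwise the proof is just the composition of two bijections.
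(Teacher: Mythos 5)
Your proposal is correct and follows exactly the paper's own route: the paper proves part (a) by pointing to the preceding discussion (the ``all $1$'s inside $\FL$ iff all top-$1$'s inside $\FL$'' reduction combined with condition~(3) of Lemma~\ref{folemma}) and part (b) by noting that $\psi$ is just a restriction of $\phi$, so properties (1)--(2) carry over. Your write-up merely makes explicit the step the paper labels ``evident'' (that the downward-nesting of the columns of a Ferrers shape forces all $1$'s below a top-$1$ lying in $\FL$ to also lie in $\FL$), which is a welcome clarification but not a different argument.
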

\begin{proof}
\enspace (a)\enspace See the preceding discussion. (Observe that the size of $\mathrm{Rect}(\FL)$ is $n\times k$.)\smallbreak
\noindent\enspace (b)\enspace Properties (1)-(2) of Lemma~\ref{folemma} also hold for $\psi$, since $\psi$ is just a restriction of $\phi$
to the set of $\Gamma$-free fillings of $\mathrm{Rect}(\FL)$ with $0$'s outside the inscribed $\FL$.
\end{proof}
In this form, Corollary~\ref{kov1} is not very useful to us, even if we rewrite the condition on Callan sequences into a more algebraic form.
We introduce a new labeling of rows and columns of Ferrers shapes instead, which is analogous to the labeling appearing in \cite{EW}.
Label the rows and columns on the northeast border of a Ferrers shape $\FL$ with the numbers $1,2,3,\dots$ such that the bottom row gets label $1$ and
the successive border edges get the remaining numbers in order, as shown in Figure~\ref{fig_CL}. We call this labeling the \emph{canonical labeling} of rows and columns of $\FL$.\par
\begin{figure}[h]
\centering
\includegraphics[scale=1.3]{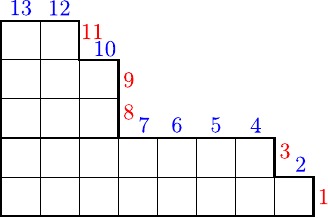}
\caption {Canonical labeling of rows and columns}\label{fig_CL}
\end{figure}
The largest assigned canonical (column) label is called the \emph{semiperimeter} of $\FL$.
The canonical labels of the $i^\text{th}$ row and $j^\text{th}$ column of $\FL$ are denoted by $l_r(i)$ and $l_c(j)$, respectively,
where the indices $i$ and $j$ are interpreted with respect to Convention~\ref{konv}. The sets of all canonical labels of rows and columns of $\FL$ are denoted by $L_r(\FL)$ and $L_c(\FL)$, respectively.
We note that $L_r(\FL)\mathop{\dot{\cup}}L_c(\FL)=\{1,2,\dots,s\}$, where $s$ is the semiperimeter of $\FL$.
\par
We obtain a more tractable form of Corollary~\ref{kov1} if we replace the row/column indices with canonical labels.  It turns out that the diagrams in $\gfree(\FL)$
can be encoded by the following type of sequences.
\begin{definition}\label{def_callan}
For a given Ferrers shape $\FL$, on \emph {$\FL$-Callan sequence} we mean a sequence $(R_1,C_1),\dots,(R_m,C_m)$ for some $m\in\N_0$ such that
\begin{itemize}
\item $R_1,\dots,R_m$ are pairwise disjoint non\-empty subsets of $L_r(\FL)$,
\item $C_1,\dots,C_m$ are pairwise disjoint nonempty subsets of $L_c(\FL)$, and 
\item $\max R_i < \min C_i$, for all $i=1,\dots,m$.
\end{itemize}
The set of $\FL$-Callan sequences is denoted by $\callan(\FL)$.
We say that an $\FL$-Callan sequence $(R_1,C_1),\dots,(R_m,C_m)$ is \emph{complete}, if 
$\cup_{i=1}^m C_i = L_c(\FL)$, i.e.\ if $\{C_1,C_2,\dots,C_m\}$ is a \emph{partition} of $L_c(\FL)$.
For example, if $\FL$ is the Ferrers shape from Figure~\ref{fig_CL}, then the sequence
$$(\{\piros{3}\},\{\kek{4},\kek{7},\kek{10}\}),\,(\{\piros{1}\},\{\kek{2},\kek{5},\kek{6},\kek{12}\}),\,(\{\piros{8},\piros{11}\},\{\kek{13}\})$$
is a complete $\FL$-Callan sequence.
The set of complete $\FL$-Callan sequences is denoted by $\ocallan(\FL)$.
%(We note that compared to the original definition of $(n,k)$-Callan sequences, only the ground sets $\{1,\dots,n\}$ and $\{1,\dots,k\}$ are replaced to $L_r(\FL)$ and $L_c(\FL)$.)
\end{definition}
We have now the following lemma.
\begin{lemma}\label{kov2}
\enspace\emph{(a)}\enspace There exists a bijection $\eta$ from $\gfree(\FL)$ to $\callan(\FL)$, for any given Ferrers shape $\FL$.\smallbreak
\noindent\enspace\emph{(b)}\enspace Moreover, if $D\in\gfree(\FL)$ is mapped to the $\FL$-Callan sequence $(R'_1,C'_1),\dots,(R'_m,C'_m)$ by $\eta$, then
the set of canonical labels of non-zero rows of $D$ is $\cup_{i=1}^m R'_i$, and the set of canonical labels of non-zero columns of $D$ is $\cup_{i=1}^m C'_i$.
\end{lemma}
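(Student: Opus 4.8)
The plan is to translate Corollary~\ref{kov1} verbatim from row/column \emph{indices} into \emph{canonical labels} by composing the bijection $\psi$ with the obvious relabeling maps $l_r$ and $l_c$. Concretely, given $D\in\gfree(\FL)$, apply $\psi$ from Corollary~\ref{kov1}(a) to get an $(n,k)$-Callan sequence $(R_1,C_1),\dots,(R_m,C_m)$, and then define $\eta(D):=(l_r(R_1),l_c(C_1)),\dots,(l_r(R_m),l_c(C_m))$, where $l_r$ and $l_c$ act elementwise. Since $l_r\colon\{1,\dots,n\}\to L_r(\FL)$ and $l_c\colon\{1,\dots,k\}\to L_c(\FL)$ are bijections, the images $l_r(R_i)$ remain pairwise disjoint nonempty subsets of $L_r(\FL)$, and likewise the $l_c(C_i)$ are pairwise disjoint nonempty subsets of $L_c(\FL)$; moreover the whole map $\eta$ is a bijection onto its image, because $\psi$ is a bijection and the relabeling is invertible. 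Part~(b) is then immediate: by Corollary~\ref{kov1}(b) the index set of non-zero rows of $D$ is $\cup_i R_i$, so the set of \emph{canonical labels} of non-zero rows is $l_r(\cup_i R_i)=\cup_i l_r(R_i)=\cup_i R'_i$, and similarly for columns.

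The one substantive point is to check that $\eta$ lands in $\callan(\FL)$, i.e.\ that the condition ``$\max R_i<\min C_i$ for all $i$'' (Definition~\ref{def_callan}) is \emph{equivalent} to the geometric condition in Corollary~\ref{kov1}(a) that all positions in $\{\max R_i\}\times C_i$ lie inside the inscribed shape $\FL$ of $\mathrm{Rect}(\FL)$. Here is where I would spell out the elementary geometry of the canonical labeling. Fix a row of $\FL$ whose index (bottom-to-top) is $i$ and whose canonical label is $l_r(i)=:a$; the cells of $\mathrm{Rect}(\FL)$ in row $i$ that lie inside $\FL$ are exactly those in columns $j$ with $\lambda_i\ge$ (column-position of $j$), and one reads off the northeast border that a cell in row $i$, column $j$ lies inside $\FL$ precisely when $l_c(j)>l_r(i)$, i.e.\ $l_c(j)>a$. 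Indeed, walking along the staircase border from the bottom, the row-edge labeled $a$ is immediately followed by the column-edges labeled $a+1,a+2,\dots$ up to the next row-edge, and these are exactly the columns that extend at least as far as row $i$. Consequently, for a fixed block $(R_i,C_i)$ with $\max R_i$ corresponding to canonical row-label $\max(l_r(R_i))$, all of the pairs $(\max R_i, c)$ with $c\in C_i$ lie inside $\FL$ if and only if every canonical column-label in $l_c(C_i)$ exceeds $\max(l_r(R_i))$, which is exactly $\max(l_r(R_i))<\min(l_c(C_i))$. (Note $\max R_i$ is the largest row-index in $R_i$, and since $l_r$ is order-preserving, $l_r(\max R_i)=\max l_r(R_i)$, so the translation of ``$\max$'' causes no trouble.)

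Putting these together: $\eta$ is a well-defined injection from $\gfree(\FL)$ into $\callan(\FL)$, and it is surjective because the relabeling maps are bijections, so every $\FL$-Callan sequence $(R'_1,C'_1),\dots,(R'_m,C'_m)$ pulls back under $(l_r^{-1},l_c^{-1})$ to an $(n,k)$-Callan sequence satisfying the hypothesis of Corollary~\ref{kov1}(a) (the inequality $\max R'_i<\min C'_i$ translating back to the geometric ``inside $\FL$'' condition by the same equivalence), hence lies in the image of $\psi$ and therefore of $\eta$. This proves (a), and (b) was already observed above. I expect the main (and only) obstacle to be stating the border-walk correspondence ``cell $(i,j)$ is inside $\FL$ $\iff$ $l_c(j)>l_r(i)$'' cleanly; once that is in hand the rest is bookkeeping.
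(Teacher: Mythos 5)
Your proposal is correct and follows essentially the same route as the paper: compose the bijection $\psi$ of Corollary~\ref{kov1} with the elementwise relabeling by $l_r$ and $l_c$, and observe that the geometric ``inside $\FL$'' condition translates exactly into $\max R_i<\min C_i$. The only difference is that you spell out the border-walk equivalence (cell $(i,j)$ lies in $\FL$ iff $l_c(j)>l_r(i)$, with $l_r$ order-preserving so that $l_r(\max R_i)=\max l_r(R_i)$), which the paper asserts as a ``key observation'' without further justification.
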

\begin{proof} This lemma is just Corollary~\ref{kov1} rewritten using canonical labels.
In more detail, assume that $\FL$ has $n$ rows and $k$ columns.
Then $\psi$~establishes a bijection between $\gfree(\FL)$ and the set of $(n,k)$-Callan sequences satisfying the condition of Corollary~\ref{kov1}.a.
We transfrom the $(n,k)$-Callan sequences into $\FL$-Callan sequences, by applying the canonical labeling on the ground sets $\{1,\dots,n\}$ and $\{1,\dots,k\}$:
The $(n,k)$-Callan sequence $(R_1,C_1),\dots,(R_m,C_m)$ is injectively transformed into the sequence 
\[(l_r(R_1),l_c(C_1)),\dots,(l_r(R_m),l_c(C_m)),\] using the standard notation $l(S)=\{l(s):s\in S\}$ if $l$ is a function and $S$ is a set.
(Recall that $l_r(i)$ or $l_c(i)$ gives the canonical label of the $i^\text{th}$ row or column of $\FL$, respectively.)
We further denote this transformation by $\pi$.
Trivially, $\pi(\callan(n,k))$ contains precisely those $(R_1,C_1),\dots,(R_m,C_m)$ sequences which satisfy the first two 
conditions in the definition of $\FL$-Callan sequences. The key observation is that a sequence $\bold{s}\in\callan(n,k)$
satisfies the condition of Corollary~\ref{kov1}.a if and only if $\pi(\bold{s})$ satisfies the third condition in the definition of $\FL$-Callan sequences.
These altogether mean that $\pi$ establishes a bijection between $\psi(\gfree(\FL))$ and $\callan(\FL)$, hence,
$\eta:=\pi\circ\psi$ is a bijection from $\gfree(\FL)$ to $\callan(\FL)$, which proves (a).
Part (b) is an easy consequence of the above. We have that $\eta(D)=\pi(\psi(D))$. Corollary~\ref{kov1}.b shows how to read off the \emph{indices} of non-zero rows and columns of $D$ from $\psi(D)$.
Applying $\pi$ on $\psi(D)$ just replaces every index with the corresponding canonical label. 
\end{proof}

%In order to prove Theorem~\ref{fotetelv}, we need the encoding of \emph{complete} $\Gamma$-free Ferrers diagrams.
Lemma~\ref{kov2}.a and Lemma~\ref{kov2}.b directly imply the following.

\begin{lemma}\label{Glemma}
There exists a bijection $\oeta$ from $\ogfree(\FL)$ to $\ocallan(\FL)$, for any given Ferrers shape $\FL$.
\end{lemma}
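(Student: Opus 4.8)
The plan is to obtain $\oeta$ simply by restricting the bijection $\eta$ from Lemma~\ref{kov2}.a to the subset $\ogfree(\FL)\subseteq\gfree(\FL)$, and to check that this restriction is exactly a bijection onto $\ocallan(\FL)\subseteq\callan(\FL)$. Since $\eta$ is already known to be a bijection from $\gfree(\FL)$ onto $\callan(\FL)$, its restriction to any subset $X$ is automatically a bijection from $X$ onto the image $\eta(X)$. Hence the only thing to verify is the set equality $\eta(\ogfree(\FL))=\ocallan(\FL)$, i.e.\ that $\eta$ maps the complete $\Gamma$-free diagrams precisely onto the complete $\FL$-Callan sequences.

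First I would recall the definitions: $D\in\gfree(\FL)$ is complete iff every column of $D$ is non-zero, i.e.\ iff the set of canonical labels of non-zero columns of $D$ equals $L_c(\FL)$; and an $\FL$-Callan sequence $(R_1,C_1),\dots,(R_m,C_m)$ is complete iff $\bigcup_{i=1}^m C_i=L_c(\FL)$. Now suppose $D\in\gfree(\FL)$ is mapped by $\eta$ to $(R'_1,C'_1),\dots,(R'_m,C'_m)$. By Lemma~\ref{kov2}.b, the set of canonical labels of non-zero columns of $D$ is exactly $\bigcup_{i=1}^m C'_i$. Therefore $D$ is complete if and only if $\bigcup_{i=1}^m C'_i=L_c(\FL)$, which is precisely the condition that $\eta(D)$ is a complete $\FL$-Callan sequence. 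This gives $D\in\ogfree(\FL)\iff \eta(D)\in\ocallan(\FL)$, hence $\eta(\ogfree(\FL))=\ocallan(\FL)$, and we may define $\oeta:=\eta|_{\ogfree(\FL)}$.

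There is essentially no obstacle here: the lemma is a pure corollary of the ``moreover'' clause (part (b)) of Lemma~\ref{kov2}, which is exactly why the paper phrases it as a direct implication. The only point that requires a moment's care is matching up the two notions of completeness — the one defined combinatorially on diagrams (all columns non-zero) and the one defined on sequences ($\{C_1,\dots,C_m\}$ partitions $L_c(\FL)$) — but this is immediate once one notes that the $C_i$ are pairwise disjoint by definition of an $\FL$-Callan sequence, so $\bigcup_i C_i=L_c(\FL)$ is equivalent to $\{C_1,\dots,C_m\}$ being a partition of $L_c(\FL)$. Thus the proof is a one-line restriction argument together with this bookkeeping observation.
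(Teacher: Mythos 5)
Your proof is correct and takes the same route as the paper, which simply defines $\oeta$ as the restriction of $\eta$ to complete diagrams; you have merely spelled out the verification (via Lemma~\ref{kov2}.b) that $\eta$ maps $\ogfree(\FL)$ onto exactly $\ocallan(\FL)$, which the paper leaves implicit.
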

\begin{proof}
The restriction of $\eta$ to complete diagrams is a suitable bijection $\oeta$.
\end{proof}

Now we turn our attention to the other type of $0$-$1$ fillings appearing in Theorem~\ref{fotetelv}, the complete lonesum Ferrers diagrams. We will heavily rely on the encoding
developed in~\cite{EW}.

\begin{definition}\label{ascdesc}
An $n$-permutation is a permutation of the integers $\{1, 2,\dots,n\}$. Fix an $n$-permutation $\alpha=a_1a_2\dots a_n$.
We say that the element $a_i$ is an \emph{ascent bottom} of $\alpha$, if $i\le n-1$ and $a_i<a_{i+1}$.
We say that the element $a_i$ is a \emph{descent top} of $\alpha$, if $i\le n-1$ and $a_i>a_{i+1}$.
The set of ascent bottom elements and the set of descent top elements of $\alpha$ are denoted by $A(\alpha)$ and $D(\alpha)$, respectively.
Note that $A(\alpha)\mathop{\dot{\cup}}D(\alpha)=\{1,2,\dots,n\}\setminus\{a_n\}$.
\end{definition}

\begin{definition}\label{def_dumont}
Given a Ferrers shape $\FL$ with semiperimeter $s$, 
an \emph{$\FL$-Dumont permutation} is an $(s+1$)-permutation $\delta=d_1d_2\dots d_{s+1}$, such that
$d_{s+1}=s+1$ and $A(\delta)=L_r(\FL)$. We note that these conditions also imply that $D(\delta)=L_c(\FL)$.
%(We note that the third condition is implied by the first and second conditions.)
For example, if $\FL$ is the Ferrers shape from Figure~\ref{fig_CL}, then
$$\kek{5},\kek{4},\kek{2},\piros{1},\piros{9},\kek{13},\piros{8},\kek{12},\kek{10},\kek{7},\kek{6},\piros{3},\piros{11},14$$
is an $\FL$-Dumont permutation.
The set of $\FL$-Dumont permutations is denoted by $\dumont(\FL)$.
\end{definition}
The following statement was proved in~\cite{EW} in a slightly different setting (for example, the Ferrers diagram is reflected, and a different terminology is used).
\begin{lemma}\label{EWlemma}\cite{Steingrimsson}
There exists a bijection $\zeta$ from $\olonesum(\FL)$ to $\dumont(\FL)$, for any given Ferrers shape $\FL$.
\end{lemma}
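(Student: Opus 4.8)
The plan is to derive $\zeta$ from the bijection of \cite{EW} (equivalently the permutation-tableau correspondence of \cite{Steingrimsson}), supplying a dictionary that translates it into the present notation. First I would restate precisely what \cite{EW} prove: for a Ferrers shape $\mu$ of semiperimeter $m$, their EW-tableaux of shape $\mu$ — certain $0$-$1$ fillings defined by forbidding a flipping-pair-type configuration — are in bijection with a family of permutations of $\{1,\dots,m+1\}$ cut out by a condition on which elements are ascents/descents relative to the boundary labels of $\mu$. Since avoiding the flipping pair $\F$ is invariant under transposing a diagram (both members of $\F$ are symmetric matrices), a suitable reflection/transposition $\rho$ of Ferrers shapes carries $\FL$ to a shape $\mu$ so that the induced relabeling of cells maps $\olonesum(\FL)$ bijectively onto the EW-tableaux of $\mu$; the two things to check here are that the defining forbidden configuration matches up (keeping track of the fact, noted just above the lemma, that \cite{EW} reflect the diagram and index rows/columns opposite to Convention~\ref{konv}), and that the completeness condition ``every column of $D$ is non-zero'' matches the normalization built into EW-tableaux, or is automatic for them.

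Next I would match the two labelings: the canonical labeling of $\FL$ (Figure~\ref{fig_CL}) should correspond under $\rho$ to the boundary labeling of $\mu$ used in \cite{EW}, and depending on the axis of $\rho$ this either preserves or interchanges the roles of $L_r$ and $L_c$. One then has to pick $\rho$ — and the direction in which the encoding permutation is read — precisely so that, after relabeling, \cite{EW}'s permutation condition becomes ``$\delta$ is an $(s+1)$-permutation with $d_{s+1}=s+1$ and $A(\delta)=L_r(\FL)$'', which is exactly Definition~\ref{def_dumont}; the accompanying equality $D(\delta)=L_c(\FL)$ is then automatic because $A(\delta)\mathop{\dot{\cup}}D(\delta)=\{1,\dots,s\}=L_r(\FL)\mathop{\dot{\cup}}L_c(\FL)$. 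The desired $\zeta$ is the composite of $\rho$, the \cite{EW}-bijection, and a relabeling of the $\pi$-type used in Lemma~\ref{kov2}; its inverse is the composite of the inverses.

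The main obstacle — and essentially the only content not already contained in \cite{EW} — is getting all of these conventions to agree, because a reflection in the wrong axis, or reading the permutation in the wrong direction, interchanges $A(\delta)$ and $D(\delta)$ and would make the lemma false as stated. I would fix the correct choice by testing it on the running example: the shape of Figure~\ref{fig_CL} together with the permutation $\kek{5},\kek{4},\kek{2},\piros{1},\piros{9},\kek{13},\piros{8},\kek{12},\kek{10},\kek{7},\kek{6},\piros{3},\piros{11},14$ from Definition~\ref{def_dumont}, whose ascent-bottom set is indeed $\{1,3,8,9,11\}=L_r(\FL)$. An alternative self-contained route is to construct $\zeta$ by an insertion procedure: insert the labels $1,2,\dots,s+1$ one at a time into a growing word, using the row- and column-sum vectors of $D$ (which determine $D$, since $D$ is lonesum) to prescribe the insertion slot at each step, arranged so that a label $\ell$ becomes an ascent bottom of the final permutation exactly when $\ell\in L_r(\FL)$ and so that $D$ is recovered from the nesting pattern of the insertions; the crux there, as in the first approach, is proving that the ascent-bottom set comes out to be precisely $L_r(\FL)$.
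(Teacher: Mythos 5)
Your proposal is correct and takes essentially the same route as the paper: the paper likewise treats this as a translation of the bijection extractable from Sections 2--3 of \cite{EW} into the present conventions, noting that ``no new ideas are involved at all.'' The only difference is that the paper makes the map concrete by writing out the explicit peeling algorithm (alternately record and delete the $0$-free rows in decreasing label order and the $1$-free columns in increasing label order, then reverse the resulting word and append $s+1$), which is exactly the kind of convention-calibrated restatement of the \cite{EW} construction that you describe.
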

\begin{proof}
A construction can be extracted from Sections~2-3 of~\cite{EW}. In fact, the bijection we need is not explicitly defined in~\cite{EW}, only very similar
variants, so we define a suitable $\zeta$ for clarity. However, after reading Sections~2-3 of~\cite{EW}, it should be clear to the reader that
our $\zeta$ is well defined, and it is indeed a $\olonesum(\FL)\to\dumont(\FL)$ bijection, as no new ideas are involved at all.\par
So pick an arbitrary diagram $D\in\olonesum(\FL)$ with semiperimeter $s$. %Add a new empty first row to $D$ (with canonical label $s+1$), as shown in Figure~TODO, and denote the new diagram by $D^|$.
Repeat the following two steps in the given order on $D$ until every canonical label $\{1,\dots,s\}$ is added to the temporary permutation $\alpha$, which is initially empty:
\begin{enumerate}
\item Add the canonical labels of all (possibly empty) $0$-free rows of the actual $D$ in decreasing order to the end of $\alpha$, and delete the (remaining) elements of these rows from $D$,
\item Add the canonical labels of all (possibly empty) $1$-free columns of the actual $D$ in increasing order to the end of $\alpha$, and delete the (remaining) elements of these columns from $D$,
\end{enumerate}
Of course, for example, on $0$-free row we mean a row without $0$ elements. After the label of a row/column is added to $\alpha$, that row/column is never considered again. It can be seen that 
this procedure continues until every row and column is processed, and we end up with an $s$-permutation $\alpha=a_1a_2\dots a_s$. Then $\zeta(D)$ is defined to be $a_sa_{s-1}\dots a_1(s+1)$.\par
For example, if $D$ is the complete lonesum diagram of Figure \ref{fig_LS}, then $\alpha$ is
$$\piros{11},\piros{8},\kek{12},\piros{9},\kek{10},\kek{13},\piros{3},\piros{1},\kek{2},\kek{4},\kek{5},\kek{6},\kek{7}$$
and so $\zeta(D)$ is
$$\kek{7},\kek{6},\kek{5},\kek{4},\kek{2},\piros{1},\piros{3},\kek{13},\kek{10},\piros{9},\kek{12},\piros{8},\piros{11},14.$$
\end{proof}
\begin{figure}[h]
\centering
\includegraphics[scale=1.3]{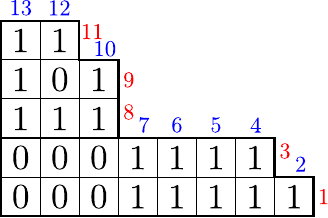}
\caption{A complete lonesum filling}\label{fig_LS}
\end{figure}

In view of Lemma~\ref{Glemma} and Lemma~\ref{EWlemma}, it is enough to construct a $\dumont(\FL)\to\ocallan(\FL)$ bijection in order to prove Theorem~\ref{fotetelv}. This is done in the next lemma, completing the
proof of our main theorem.
\begin{lemma}\label{keylemma}
There exists a bijection $\nu$ from $\dumont(\FL)$ to $\ocallan(\FL)$, for any given Ferrers shape $\FL$.
\end{lemma}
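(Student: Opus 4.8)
The plan is to build $\nu$ by a direct combinatorial rule that translates the cyclic/descent structure of an $\FL$-Dumont permutation into the block structure of a complete $\FL$-Callan sequence, exploiting the common ``skeleton'' $L_r(\FL)\mathbin{\dot\cup}L_c(\FL)=\{1,\dots,s\}$. The key observation is that both objects amount to a way of bundling row-labels with column-labels: in a complete $\FL$-Callan sequence $(R_1,C_1),\dots,(R_m,C_m)$ the sets $C_1,\dots,C_m$ partition $L_c(\FL)$ and each $R_i$ is a nonempty subset of $L_r(\FL)$ with $\max R_i<\min C_i$; in an $\FL$-Dumont permutation $\delta=d_1\cdots d_{s}(s+1)$ the ascent bottoms are exactly $L_r(\FL)$ and the descent tops exactly $L_c(\FL)$, so reading $\delta$ from left to right one sees maximal ``ascending runs followed by a drop'' — i.e.\ the permutation decomposes naturally into maximal increasing runs, and the last element of each run (a descent top, hence a column label, except the final $s+1$) carries with it the ascent bottoms that preceded it.

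First I would make the run-decomposition precise: write $\delta=w_1w_2\cdots w_t(s+1)$ where each $w_j$ is a maximal strictly increasing consecutive block. The last entry of $w_j$ is a descent top for $j<t$ (so lies in $L_c(\FL)$), and every entry of $w_j$ except its last is an ascent bottom (so lies in $L_r(\FL)$). This already produces, for each run, a pair (set of row labels in the run, singleton column label = last entry of the run), but it is not yet an $\FL$-Callan sequence for two reasons: the column parts are singletons rather than arbitrary blocks, and the constraint $\max R_i<\min C_i$ need not hold run-by-run. The fix is to group consecutive runs: I would merge a block of consecutive runs $w_j,w_{j+1},\dots,w_{j+p}$ into a single Callan pair whenever the column labels they produce should sit in one $C_i$. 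The natural criterion comes from the values: within $\delta$, after a descent top $d_a>d_{a+1}$, the next increasing run may start below the previous column label; one bundles runs together precisely as long as the new run's elements stay below the column labels already collected, and starts a fresh Callan pair when an ascent bottom exceeds a previously seen column label. I would define $\nu(\delta)$ to be the resulting sequence of pairs (row-label set, column-label set), ordered as the blocks appear in $\delta$, and then check the three defining conditions of Definition~\ref{def_callan} together with completeness ($\bigcup C_i=L_c(\FL)$, which is automatic since every descent top except $s+1$ is used exactly once).

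For the inverse, given $(R_1,C_1),\dots,(R_m,C_m)\in\ocallan(\FL)$ I would reconstruct $\delta$ block by block: within the $i$-th block, interleave the elements of $R_i\cup C_i$ into the unique arrangement that is a concatenation of increasing runs each ending at an element of $C_i$, with the elements of $C_i$ placed in decreasing order as the run-ends and the elements of $R_i$ distributed into the runs by value (each $r\in R_i$ goes into the run whose terminating column label is the smallest element of $C_i$ exceeding $r$ — this is where $\max R_i<\min C_i$ guarantees every $r$ has a home). Concatenating these blocks in order and appending $s+1$ yields a permutation; one verifies its ascent bottoms are exactly $\bigcup R_i=L_r(\FL)$ and its descent tops exactly $\bigcup C_i=L_c(\FL)$, so it lies in $\dumont(\FL)$, and that this map is a two-sided inverse of $\nu$.

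The main obstacle I expect is pinning down the merging/splitting rule so that it simultaneously (i) is well defined and order-preserving, (ii) forces $\max R_i<\min C_i$ in each produced pair, and (iii) is reversible — in other words, proving that the decomposition of $\delta$ into Callan blocks is canonical and that no information is lost when several increasing runs are absorbed into one pair. Concretely, the delicate point is the claim that an ascent bottom which exceeds some already-recorded descent top must begin a new Callan block, while one that does not may always be appended to the current block without violating disjointness or the $\max R<\min C$ inequality; establishing this requires a careful look at the relative order of ascent bottoms and descent tops in a Dumont permutation. Once that structural lemma is in hand, checking that $\nu$ and the reconstruction above are mutually inverse is routine bookkeeping, and I would relegate the verification of conditions (1)--(3) of Definition~\ref{def_callan} to a short case analysis rather than spell it out in full.
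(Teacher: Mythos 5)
Your overall strategy (scan the permutation left to right, cut it into consecutive pieces each yielding one Callan pair, then invert piece by piece) is the same as the paper's, but the two places where you defer the work are exactly where your rules, as stated, break down. First, the forward map. The right decomposition unit is not the maximal increasing run but the maximal same-type block: a maximal consecutive block of row labels (automatically increasing) or of column labels (automatically decreasing). Each Callan pair then occupies a consecutive segment of $\delta$ of one of the two forms $R^{\nearrow}C^{\searrow}$ or $C^{\searrow}R^{\nearrow}$, obtained by cutting between two adjacent blocks at a threshold given by a value comparison ($\max R$ against the following column block, or $\min C$ against the following row block). Your greedy bundling rule over-assigns row labels: completeness of a Callan sequence only requires $\bigcup C_i=L_c(\FL)$, so row labels occurring after the last column label of $\delta$ must be left out of every pair, yet your rule (``bundle while below the collected column labels'') would absorb them. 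In the paper's worked example the element $15$ sits after the final columns $20,18$ and satisfies $15<18$, so your rule would put it into the pair $(\{7,13,17\},\{20,18\})$, destroying injectivity (the permutation with $15$ moved in front of $20$ would map to the same output). Your rule is also silent on when a \emph{column} label must start a new pair (e.g.\ the $8$ following $16,14$ in that example, which lies below the current pair's $\max R=11$).

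Second, the inverse. Since $\max R_i<\min C_i$, your rule ``each $r\in R_i$ goes to the run ending at the smallest element of $C_i$ exceeding $r$'' sends all of $R_i$ into the run ending at $\min C_i$, producing the arrangement $\left(C_i\setminus\{\min C_i\}\right)^{\searrow}R_i^{\nearrow}(\min C_i)$; this is generically not a legal segment of a Dumont permutation, because $\min C_i$ is then typically followed by a larger element and so becomes an ascent bottom even though it lies in $L_c(\FL)$ (for the pair $(\{3,5,9\},\{12,10\})$ of the example your rule gives $12,3,5,9,10$ instead of $12,10,3,5,9$). The actual inverse must choose, for each pair, between the two arrangements $R_i^{\nearrow}C_i^{\searrow}$ and $C_i^{\searrow}R_i^{\nearrow}$, and the heart of the proof is showing that exactly one choice is consistent with the already-reconstructed suffix (the decision is made by comparing $\min C_i$ with the first element of that suffix). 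Your reconstruction has no such branching, and it also never reinserts the leftover row labels belonging to no $R_i$ (these must seed the reconstruction, arranged in increasing order before $s+1$). Since you explicitly leave the merging/splitting lemma unproved and your stated rules fail on the paper's own example, the proposal has a genuine gap at the core of the lemma.
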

\begin{remark}
We emphasize that this is an elementary lemma involving certain type of sequences and permutations and, in fact,
it has nothing to do with Ferrers shapes or canonical labelings.
In this lemma we can think of $L_r(\FL)$ and $L_c(\FL)$ as two \emph{arbitrary} given sets (denoted by some cryptic notation) such that
$L_r(\FL)\mathop{\dot{\cup}}L_c(\FL)=\{1,2,\dots,s\}$ for some integer $s$, when reading the definitions of $\dumont(\FL)$ and $\ocallan(\FL)$.
\end{remark}
\begin{proof}[Proof of Lemma~\ref{keylemma}]
In the spirit of the Remark, set $S_1:=L_r(\FL)$ and $S_2:=L_c(\FL)$, and assume that
$S_1\mathop{\dot{\cup}}S_2=\{1,\dots,s\}$. We can forget about the Ferrers shape $\FL$.\par
First, we give the definition of a suitable mapping $\nu$.
Pick an arbitrary $(s+1)$-permutation $\alpha=(a_1a_2\dots a_{s+1})\in\dumont(\FL)$. Recall that $a_{s+1}=s+1$
and, for $i\le n$, the element $a_i$ is an ascent bottom if and only if $a_i\in S_1$,
and $a_i$ is a descent top if and only if $a_i\in S_2$, by definition. We will refer to this as the \emph{Dumont property}. We say that an element is of \emph{type j} (where $j=1,2$)
if it is contained in the set $S_j$, and in addition, we define the element $a_{s+1}=s+1$ to be of type $1$.
These two types of elements divide $\alpha$ into blocks, i.e.\ a \emph{block} of $\alpha$ is a maximal sequence of consecutive elements of the same type in $\alpha$.
For example, if $s=20$ and
\begin{align*}
S_1&=\{1,3,5,...,19\},\\
S_2&=\{2,4,6,\dots,20\},\\
\alpha&=(\kek{12},\kek{10},\piros{3},\piros{5},\piros{9},\piros{11},\kek{16},\kek{14},\kek{8},\kek{6},\kek{4},\kek{2},\piros{1},\piros{7},\piros{13},\piros{17},\kek{20},\kek{18},\piros{15},\piros{19},\piros{21}),
%12,10|3,5,7,9,11|16,14,8,6,4,2|1,13,17|20,18|15,19,21.
\end{align*}
then the blocks of $\alpha$ are $(12,10)$, $(3,5,9,11)$, $(16,14,8,6,4,2)$, $(1,7,13,17)$, $(20,18)$ and $(15,19,21)$.
On the type of a block we mean the common type of its elements. It is obvious that the types of blocks alternate, and $a_{s+1}=s+1$ implies
that the last block is always of type $1$. By the Dumont property, the elements of a block of type $1$ are in increasing order,
and the elements of a block of type $2$ are in decreasing order in $\alpha$.\par
Now we define the transformation of $\alpha$. Initially, the output sequence $\bold{a}$ is empty.
\begin{enumerate}
\item Consider the first block of $\alpha$, and denote this block by $B_1$.
Let $B_2$ denote the next block in $\alpha$ (if no such block exists, we are done).
\item There are two cases:
\begin{enumerate}
\item If $B_1$ is of type $1$ (and $B_2$ is of type $2$), then let $U$ be the set of those elements of $B_2$
that are greater than $\max B_1$, the last element of $B_1$. By the Dumont property, $U$ is not empty (the first element of $B_2$ belongs to $U$),
and the elements of $U$ form an initial segment of $B_2$ (this segment can be the whole $B_2$).
In this case we add the pair $(B_1,U)$ to the end of $\bold{a}$, and remove the elements of $B_1$ and $U$ from $\alpha$ (these elements form an initial segment).
\item If $B_1$ is of type $2$ (and $B_2$ is of type $1$), then let $U$ be the set of those elements of $B_2$
that are smaller than $\min B_1$, the last element of $B_1$. By the Dumont property, $U$ is again not empty and the elements of $U$ form
an initial segment of $B_2$. In this case we add the pair $(U,B_1)$ to the end of $\bold{a}$, and remove the elements of $B_1$ and $U$ from $\alpha$
(these elements form an initial segment).
\end{enumerate}
\item Continue with step~(1) on the truncated $\alpha$, and repeat this procedure until the number of blocks in $\alpha$ decreases to $1$:
If the actual $\alpha$ has only one block in step~(1), then set $\nu(\alpha):=\bold{a}$, and the process terminates.
\end{enumerate}
 Let us demonstrate this procedure on the example permutation $\alpha$ defined in the earlier part of this proof. Here and henceforth, the superscript $(i)$ of $\alpha$ and $\bold{a}$ indicates the actual state after the $i^{\text{th}}$ stage.
\begin{align*}
\alpha&=(\kek{12},\kek{10},\piros{3},\piros{5},\piros{9},\piros{11},\kek{16},\kek{14},\kek{8},\kek{6},\kek{4},\kek{2},\piros{1},\piros{7},\piros{13},\piros{17},\kek{20},\kek{18},\piros{15},\piros{19},\piros{21}).
\intertext {For this $\alpha$, in the first stage case~(2.b) applies with $B_1=\{\kek{12},\kek{10}\}$ and $U=\{\piros{3},\piros{5},\piros{9}\}$, and $(U,B_1)$ is added to the output. We
are left with}
\alpha^{(1)}&=(\piros{11},\kek{16},\kek{14},\kek{8},\kek{6},\kek{4},\kek{2},\piros{1},\piros{7},\piros{13},\piros{17},\kek{20},\kek{18},\piros{15},\piros{19},\piros{21})\\
\quad\bold{a}^{(1)}&:\ (\{\piros{3},\piros{5},\piros{9}\},\{\kek{12},\kek{10}\}).
%\end{align*}
\intertext {
In the next stage case~(2.a) applies with $B_1=\{\piros{11}\}$ and $U=\{\kek{16},\kek{14}\}$ resulting}
%\begin{align*}
\alpha^{(2)}&=(\kek{8},\kek{6},\kek{4},\kek{2},\piros{1},\piros{7},\piros{13},\piros{17},\kek{20},\kek{18},\piros{15},\piros{19},\piros{21})\\
\quad\bold{a}^{(2)}&:\ (\{\piros{3},\piros{5},\piros{9}\},\{\kek{12},\kek{10}\}),\,(\{\piros{11}\},\{\kek{16},\kek{14}\}).
%\end{align*}
\intertext{After the next stage we have}
%\begin{align*}
\alpha^{(3)}&=(\piros{7},\piros{13},\piros{17},\kek{20},\kek{18},\piros{15},\piros{19},\piros{21})\\
\quad\bold{a}^{(3)}&:\ (\{\piros{3},\piros{5},\piros{9}\},\{\kek{12},\kek{10}\}),\,(\{\piros{11}\},\{\kek{16},\kek{14}\}),\,(\{\piros{1}\},\{\kek{8},\kek{6},\kek{4},\kek{2}\});
%\end{align*}
\intertext{and then}
\alpha^{(4)}&=(\piros{15},\piros{19},\piros{21})\\
\quad\bold{a}^{(4)}&:\ (\{\piros{3},\piros{5},\piros{9}\},\{\kek{12},\kek{10}\}),\,(\{\piros{11}\},\{\kek{16},\kek{14}\}),\,(\{\piros{1}\},\{\kek{8},\kek{6},\kek{4},\kek{2}\}),(\{\piros{7},\piros{13},\piros{17}\},\{\kek{20},\kek{18}\}).
\end{align*}
Finally, we conclude that $\nu(\alpha)$ is defined to be the sequence 
$$(\{\piros{3},\piros{5},\piros{9}\},\{\kek{12},\kek{10}\}),\,(\{\piros{11}\},\{\kek{16},\kek{14}\}),\,(\{\piros{1}\},\{\kek{8},\kek{6},\kek{4},\kek{2}\}),(\{\piros{7},\piros{13},\piros{17}\},\{\kek{20},\kek{18}\}).$$
%So pick an arbitrary sequence $\bold{a}\in\ocallan(\FL)$.
%If $\bold{a}$ is the sequence $(R_1,C_1),\dots,(R_m,C_m)$ then define a new set $R_{m+1}$ and $R^+_{m+1}$ as
%\begin{align*}
%R_{m+1}&=S_r\setminus(\cup_{i=1}^m R_i),\\
%R^+_{m+1}&=R_{m+1}\cup\{s+1\}.
%\end{align*}
%We note that $R_{m+1}$ may be empty, but $R^+_{m+1}$ is always non-empty.
Back to the general discussion, it is clear that this procedure terminates, since the number of blocks of $\alpha$ decreases at each stage.
It is also easy to check that the last element of $\alpha$ (that is, $s+1$) ``survives'' the procedure, so a nonempty final segment of the last block of $\alpha$ remains
at the end. By inspecting the cases (2.a) and (2.b), we can see that whenever the pair $(R,C)$ is added to $\bold{a}$, then
$\emptyset\subsetneq R\subseteq S_1$, $\emptyset\subsetneq C\subseteq S_2$, and $\max R<\min C$. This, together with the ``move some parts of the input to the output'' fashion of the algorithm, yields 
that $\nu(\alpha)$ is an $\FL$-Callan sequence. The completeness of $\nu(\alpha)$ follows from the fact that every block of type $2$ is processed by step~(2),
as the surviving elements of $\alpha$ are of type $1$.\par
Now we sketch why $\nu$ is a bijection. To this end, pick an arbitrary element $\bold{b}\in\ocallan(\FL)$, where
$\bold{b}$ is, say, the sequence $(R_1,C_1),\dots,(R_m,C_m)$. Our goal is to find a unique inverse image $\beta$. So assume that $\nu(\beta)=\bold{b}$ for some
$\beta\in\dumont(\FL)$.
It turns out that $\beta$ can be uniquely reconstructed from $\bold{b}$ by processing the stages of the encoding algorithm in reverse direction, from the last stage to the first stage.
We know that the number of stages is $m$, the number of elements of $\bold{b}$.
We know that the surviving elements of $\beta$ after the last stage are precisely the elements in the set
$$\{1,\dots,s+1\}\setminus\cup_{i=1}^m(R_i\cup C_i),$$
which is a subset of $S_1\cup\{s+1\}$ by the completeness of $\bold{b}$, and these elements form $\beta^{(m)}$ in increasing order.
We know that $\beta^{(m)}$ was obtained from $\beta^{(m-1)}$ by removing an initial segment $\gamma$ of $\beta^{(m-1)}$, i.e.\ $\beta^{(m-1)}=\gamma\beta^{(m)}$,
and this removed initial segment $\gamma$ was ``transformed'' into $(R_m,C_m)$ in step~(2) of the last stage of the algorithm.
By inspection, if step (2.a) was applied, then $\gamma=R_m^{\nearrow}C_m^{\searrow}$ with $B_1=R_m$ and $U=C_m$,
else if step (2.b) was applied, then $\gamma=C_m^{\searrow}R_m^{\nearrow}$ with $B_1=C_m$ and $U=R_m$,
where $R_m^{\nearrow}$ denotes the sequence of elements of $R_m$ in increasing order,
and $C_m^{\searrow}$ denotes the sequence of elements of $C_m$ in decreasing order.
The point is that \emph{exactly one} of these scenarious could happen, as we will see. %due to the definition of $U$.
Recall that
\begin{enumerate}[label=(\roman*)]
\item $\max R_m<\min C_m$,
\end{enumerate}
and by the Dumont property of $\beta$,
\begin{enumerate}[resume, label=(\roman*)]
\item the elements of $R_m$ are ascent bottoms in $\beta^{(m-1)}=\gamma\beta^{(m)}$,
\item and the elements of $C_m$ are descent tops in $\beta^{(m-1)}=\gamma\beta^{(m)}$.
\end{enumerate}
If the first element of $\beta^{(m)}$ is greater than $\min{C_m}$, then the first option $\gamma=R_m^{\nearrow}C_m^{\searrow}$
is not possible, because $\beta^{(m-1)}=R_m^{\nearrow}C_m^{\searrow}\beta^{(m)}$ would violate (iii),
as the last element of $C_m^{\searrow}$ would not be a descent top.
Furthermore, $\beta^{(m-1)}:=C_m^{\searrow}R_m^{\nearrow}\beta^{(m)}$, the second option,
satisfies properties (ii)-(iii), and using property (i) and the definition of $U$ in step~(2.b), it is straightforward
to check that in this case step~(2.b) \emph{indeed} transforms 
this $\beta^{(m-1)}$ into $\beta^{(m)}$ and creates the pair $(R_m,C_m)$.\par
Otherwise, if the first element of $\beta^{(m)}$ is less than $\min{C_m}$, then the second option $\gamma=C_m^{\searrow}R_m^{\nearrow}$
is not possible, because then $\beta^{(m-1)}=C_m^{\searrow}R_m^{\nearrow}\beta^{(m)}$ would either violate~(ii) [if $\max R_m^{\nearrow}>\min\beta^{(m)}$],
or $\beta^{(m-1)}$ would not be transformed into $\beta^{(m)}$ [if $\max R_m^{\nearrow}<\min\beta^{(m)}$] because
step~(2.b) would add the first element of $\beta^{(m)}$ to $U$.
Furthermore, $\beta^{(m-1)}:=R_m^{\nearrow}C_m^{\searrow}\beta^{(m)}$, the first option, 
satisfies properties (ii)-(iii), and using property (i) and the definition of $U$ in step~(2.a), it is straightforward
to check that in this case step~(2.a) \emph{indeed} transforms this $\beta^{(m-1)}$ into $\beta^{(m)}$ and creates the pair $(R_m,C_m)$.\par
In sum, we determined the unique $\beta^{(m-1)}$ in both cases. We can repeat this process, $\beta^{(i-1)}$ can be reconstructed uniquely
from $\beta^{(i)}$ and $(R_i,C_i)$, for $i=m,m-1,\dots$, until we reach to a unique permutation~$\beta$.
There are two different cases. When $\beta^{(i)}$ starts with a block of type~$1$, the same argument applies as above.
We leave the reader to verify the analogous case when $\beta^{(i)}$ starts with a block of type~$2$. 
After doing that, it should be clear that the obtained $\beta$ is indeed an $\FL$-Dumont permutation, for which $\nu(\beta)=\bold{b}$,
as required.
\end{proof}

\section{An application: Staircase diagrams and Genocchi numbers}

This section is devoted to the case of staircase shape, a special type of Ferrers shape. In this section $S_n$ denotes the staircase (Ferrers) shape
associated to the partition $(n,n-1,\dots,1)$, see Figure~\ref{fig_SS}.\par
\begin{figure}[h]
\centering
\includegraphics[scale=1.1]{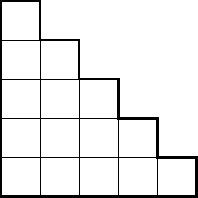}
\caption{The staircase shape $S_5$}\label{fig_SS}
\end{figure}
We know from Theorem~\ref{fotetel} and Theorem~\ref{fotetelv}
that the number of $\Gamma$-free $0$-$1$ fillings of $S_n$ is equal to the number of lonesum $0$-$1$ fillings of $S_n$, and the same holds
if we consider \emph{complete} $0$-$1$ fillings only. It turns out, as it follows from results in \cite{EW} and  \cite{Steingrimsson}, that in the case 
of complete $0$-$1$ fillings a famous combinatorial quantity appears, the Genocchi number. Now we discuss how this can be deduced 
from our results developed in the previous section.
We start with a well-known combinatorial definition of Genocchi numbers \cite{Dumont}.
\begin{definition}\label{def_genocchi}
The unsigned \emph{Genocchi number} $|G_{2n+2}|$ counts the number of those $(2n+1)$-permutations $\alpha$ for which
\begin{itemize}
\item the last element of $\alpha$ is $2n+1$,
\item $A(\alpha)=\{1,3,5,\dots,2n-1\}$, and
\item $D(\alpha)=\{2,4,6,\dots,2n\}$,
\end{itemize}
where $A(\alpha)$ and $D(\alpha)$ are the ascent bottom and descent top sets defined in Definition~\ref{ascdesc}.
We note that permutations satisfying the above conditions are called (classical) \emph{Dumont permutations of the first kind}
in the literature.
\end{definition}
Using this combinatorial definition of Genocchi numbers, we can easily count the number of \emph{complete} lonesum/$\Gamma$-free $0$-$1$ fillings of $S_n$.
(The total number of lonesum/$\Gamma$-free $0$-$1$ fillings of $S_n$ is determined in the next section, see Theorem~\ref{th:Hetyei2} and Corollary~\ref{cor:Hetyei2}.)
\begin{theorem}\label{th_genocchi}
(a)\enspace The number of complete $\Gamma$-free $0$-$1$ fillings of $S_n$ is the Genocchi number $|G_{2n+2}|$.\par
\noindent\enspace(b)\enspace The number of complete lonesum $0$-$1$ fillings of $S_n$ is the Genocchi number $|G_{2n+2}|$.
\end{theorem}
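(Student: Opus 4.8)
The plan is to derive the theorem from the machinery already assembled, the only genuinely new input being one small concrete computation: the canonical labeling of the staircase shape $S_n$.

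First I would observe that part (a) is equivalent to part (b). Indeed, Theorem~\ref{fotetelv} gives $|\ogfree(S_n)|=|\olonesum(S_n)|$, so it suffices to establish either of the two claims, and I would prove (b). For (b), invoke Lemma~\ref{EWlemma}, which provides a bijection $\zeta\colon\olonesum(S_n)\to\dumont(S_n)$; this reduces the problem to showing $|\dumont(S_n)|=|G_{2n+2}|$.

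The remaining step is to identify $\dumont(S_n)$ explicitly, for which I need the canonical labels of $S_n$. Reading the northeast border of $S_n$ upward from the bottom-right corner, one meets alternately row edges and column edges: the right edge of row $1$, then the top of the rightmost (one-cell) column, then the right edge of row $2$, then the top of the next column, and so on, finishing with the right edge of row $n$ followed by the top of the leftmost column. Hence the semiperimeter is $s=2n$, the row labels are $L_r(S_n)=\{1,3,5,\dots,2n-1\}$, and the column labels are $L_c(S_n)=\{2,4,6,\dots,2n\}$. Substituting these into Definition~\ref{def_dumont}, an $S_n$-Dumont permutation is exactly a $(2n{+}1)$-permutation $\delta=d_1d_2\dots d_{2n+1}$ with $d_{2n+1}=2n+1$, $A(\delta)=\{1,3,\dots,2n-1\}$ and $D(\delta)=\{2,4,\dots,2n\}$, which is precisely a Dumont permutation of the first kind as in Definition~\ref{def_genocchi}. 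Therefore $|\dumont(S_n)|=|G_{2n+2}|$, and Lemma~\ref{EWlemma} gives $|\olonesum(S_n)|=|G_{2n+2}|$, establishing (b); by the first paragraph, $|\ogfree(S_n)|=|G_{2n+2}|$ as well, establishing (a).

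I do not expect any real obstacle. The only point requiring a moment's care is the description of the northeast border and the resulting parity pattern of canonical labels, which is immediate from the (self-conjugate) staircase structure and is best accompanied by a reference to Figure~\ref{fig_SS}. As an alternative route — or a consistency check — one could instead obtain part (a) directly through $\ocallan(S_n)$ via Lemma~\ref{Glemma} and Lemma~\ref{keylemma}, after describing the complete $S_n$-Callan sequences explicitly; this gives the same count but is strictly more work, so I would not take that path in the main proof.
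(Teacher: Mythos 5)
Your proposal is correct and follows exactly the paper's own argument: reduce (a) to (b) via Theorem~\ref{fotetelv}, apply Lemma~\ref{EWlemma}, and identify $\dumont(S_n)$ with the classical Dumont permutations of the first kind by computing $L_r(S_n)=\{1,3,\dots,2n-1\}$ and $L_c(S_n)=\{2,4,\dots,2n\}$. Your explicit justification of the label parities from the northeast border is a small addition of detail that the paper simply asserts.
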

\begin{proof}
By Theorem~\ref{fotetelv}, the two cardinalities are the same, so it is enough to prove the second statement.
Lemma~\ref{EWlemma} establishes a bijection between the set of complete lonesum $0$-$1$ fillings of $S_n$
and the set of $S_n$-Dumont permutations (cf.\ Definition~\ref{def_dumont}). But $S_n$-Dumont permutations are precisely the Dumont permutations of the first kind
in Definition~\ref{def_genocchi}, since $L_r(S_n)=\{1,3,\dots,2n-1\}$ and $L_c(S_n)=\{2,4,\dots,2n\}$, and the semiperimeter of $S_n$ is $2n$.
Hence, the theorem follows.
\end{proof}
Part (a) of the previous theorem and Lemma~\ref{Glemma} (applied on $S_n$) show new interpretations of Genocchi numbers.
\begin{theorem}
The Genocchi number $|G_{2n+2}|$ counts the number of sequences
$$R_1,C_1,R_2,C_2,\dots,R_m,C_m,R_{m+1}$$
for which
\begin{itemize}
\item $\{R_1,\dots,R_{m+1}\}$ is a partition of $\{1,3,5,\dots,2n+1\}$,
\item $\{C_1,\dots,C_{m}\}$ is a partition of $\{2,4,6,\dots,2n\}$,
\item $\max R_i < \min C_i$, for all $i=1,\dots,m$, and
\item $m$ is an arbitrary nonnegative integer.
\end{itemize}
(We note that the third condition implies that $2n+1\in R_{m+1}$.)
\end{theorem}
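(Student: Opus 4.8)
The plan is to read the statement off from Theorem~\ref{th_genocchi}(a) together with Lemma~\ref{Glemma}, both specialized to the staircase shape $S_n$, once we recognize that the sequences $R_1,C_1,\dots,R_m,C_m,R_{m+1}$ in the statement are nothing but the complete $S_n$-Callan sequences equipped with one extra bookkeeping block. Since $S_n$ is the Ferrers shape of $(n,n-1,\dots,1)$, its canonical labels are $L_r(S_n)=\{1,3,5,\dots,2n-1\}$ and $L_c(S_n)=\{2,4,6,\dots,2n\}$, and its semiperimeter is $2n$. By Definition~\ref{def_callan}, a complete $S_n$-Callan sequence $(R_1,C_1),\dots,(R_m,C_m)$ is then precisely a list in which the $C_i$ partition $\{2,4,\dots,2n\}$, the $R_i$ are pairwise disjoint nonempty subsets of $\{1,3,\dots,2n-1\}$ (whose union may be a \emph{proper} subset of $L_r(S_n)$), and $\max R_i<\min C_i$ for all $i$.

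First I would set up the bijection. To a complete $S_n$-Callan sequence $(R_1,C_1),\dots,(R_m,C_m)$ I assign the sequence $R_1,C_1,\dots,R_m,C_m,R_{m+1}$, where $R_{m+1}:=\{1,3,5,\dots,2n+1\}\setminus\bigcup_{i=1}^m R_i$ collects the odd labels left unused by the $R_i$, together with the new label $2n+1$. Conversely, from a sequence satisfying the four conditions of the statement I simply delete the trailing block $R_{m+1}$. I would then verify the routine facts that these two maps are well defined and mutually inverse: in the forward direction the partition, disjointness, and $\max R_i<\min C_i$ requirements are all inherited verbatim; in the backward direction $R_{m+1}$ is forced to equal $\{1,3,\dots,2n+1\}\setminus\bigcup_{i=1}^m R_i$ because $\{R_1,\dots,R_{m+1}\}$ is a partition of $\{1,3,\dots,2n+1\}$, so deleting it and re-adjoining that complement returns the original list.

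The only delicate points --- which I would flag as the ``obstacle'', although they reduce to a one-line parity observation --- are the two edge conditions hidden in the word \emph{partition}. For the forward map, $\{R_1,\dots,R_{m+1}\}$ must consist of nonempty blocks, so one needs $R_{m+1}\neq\emptyset$; this holds because $\max R_i<\min C_i\le 2n$ forces $2n+1\notin R_i$ for every $i\le m$, whence $2n+1\in R_{m+1}$ --- exactly the parenthetical claim in the statement. For the backward map, one must check that each $R_i$ with $i\le m$ lies inside $L_r(S_n)=\{1,3,\dots,2n-1\}$, not merely inside $\{1,3,\dots,2n+1\}$, so that $(R_i,C_i)$ is a legitimate block of an $S_n$-Callan sequence; but again $\max R_i<\min C_i\le 2n$ together with oddness gives $\max R_i\le 2n-1$. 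Once these observations are in place, the chain of equalities ``number of sequences in the statement'' $=|\ocallan(S_n)|=|\ogfree(S_n)|=|G_{2n+2}|$ --- using the bijection of Lemma~\ref{Glemma} and part~(a) of Theorem~\ref{th_genocchi}, respectively --- completes the proof.
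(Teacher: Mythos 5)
Your proposal is correct, and its core step --- the observation that the sequences in the statement are exactly the complete $S_n$-Callan sequences of Definition~\ref{def_callan} re-packaged by adjoining the leftover block $R_{m+1}$, together with the two parity checks that $2n+1\in R_{m+1}$ (so $R_{m+1}\neq\emptyset$) and that $\max R_i<\min C_i\le 2n$ forces $R_i\subseteq\{1,3,\dots,2n-1\}$ --- is identical to the paper's. The only divergence is in how the chain is closed: you invoke Lemma~\ref{Glemma} and Theorem~\ref{th_genocchi}(a), i.e.\ you route through complete $\Gamma$-free fillings of $S_n$, whereas the paper's proof instead appeals directly to Lemma~\ref{keylemma}, the explicit bijection $\dumont(S_n)\to\ocallan(S_n)$, since $S_n$-Dumont permutations are exactly the classical Dumont permutations of the first kind counted by $|G_{2n+2}|$. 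Both closings are legitimate (the paper itself advertises your route in the sentence introducing the theorem); the paper's choice has the advantage of exhibiting a single direct bijection between the classical Genocchi objects and the new ones, which is why it showcases the worked example from Lemma~\ref{keylemma}, while yours leans on Theorem~\ref{th_genocchi}(a), which is itself derived through the longer chain via Theorem~\ref{fotetelv}.
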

\begin{proof} 
Observe first that the sequences in the theorem are essentially the same as complete $S_n$-Callan sequences (cf.\ Definition~\ref{def_callan}).
We just transformed the complete $S_n$-Callan sequences
$$(R_1,C_1),(R_2,C_2),\dots,(R_m,C_m)$$
into sequences
$$R_1,C_1,R_2,C_2,\dots,R_m,C_m,R_{m+1}$$
by dropping some parentheses and adding the set $R_{m+1}:=\{1,3,5,\dots,2n+1\}\setminus(\cup_{i=1}^m R_i)$. We note that $R_{m+1}$ is non-empty,
as $2n+1\in R_{m+1}$.\par
After these preliminaries it is enough to refer to Lemma~\ref{keylemma}, which gives a direct
bijection from $\dumont(S_n)$ to $\ocallan(S_n)$, where $\dumont(S_n)$ is the set of Dumont permutations of the first kind, the classical interpretation of $|G_{2n+2}|$,
as it was observed in the proof of Theorem~\ref{th_genocchi}, and $\ocallan(S_n)$ is the set of sequences of this theorem (in a slightly different form, as discussed above).
We note that the worked-out example in the proof of Lemma~\ref{keylemma} illustrates the encoding of a Dumont permutation of the first kind
into a complete $S_n$-Callan sequence.
\end{proof}

\section{Alternation acyclic tournaments}

In this section we connect our results to the recently presented combinatorial interpretations of the Genocchi numbers by Hetyei \cite{Hetyei} The discovery of this new geometrical interpretation was due to Hetyei accidental during his investigations of a homogenous variant of the Linial arrangement. He used analogously to Postnikov and Stanley \cite{PostStan} a class of tournaments for bijectively labeling the regions of the so-called \emph{homogenized Linial arrangements}. Here we present new combinatorial, bijective, proof for a theorem of this work and establish some new connections.

First, we recall the necessary definitions and notation from \cite{Hetyei}. 

A \emph{tournament} on the set $\{1,2,\ldots, n\}$ is a directed simple graph (without loops and multiple edges), such that for each pair of vertices $\{i,j\}$ from $\{1,2,\ldots, n\}$, exactly one of the directed edges $i\rightarrow j$ or $i\leftarrow j$ belongs to the graph. We consider $\{1,2,\ldots, n\}$ with the natural order. A directed edge $i\rightarrow j$ is called an \emph{ascent} if $i<j$ (and is denoted by $i\xrightarrow{a}j$), otherwise it is called a \emph{descent} (denoted by $i\xrightarrow{d} j$).     
A directed cycle $C=(v_0,v_1,\ldots v_{2k-1})$ is \emph{alternating} if ascents and descents alternate along the cycle, i.e., $v_{2j}\xrightarrow{d}v_{2j+1}$ and $v_{2j+1}\xrightarrow{a} v_{2j+2}$ hold for all $j$, the indices taken modulo $2k$. A tournament is \emph{alternation acyclic} if it does not contain any alternating cycle. An important characterization of alternation acyclic tournaments is the following (see Corollary 2.3 in \cite{Hetyei}).
\begin{proposition}\cite{Hetyei}\label{prop:Hetyei}
A tournament $T$ on $\{1,2,\ldots, n\}$ is alternation acyclic if and only if it does not contain any alternating cycle of length 4. 
\end{proposition}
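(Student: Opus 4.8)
The plan is as follows. One implication is immediate: an alternating cycle of length $4$ is in particular an alternating cycle, so a tournament with no alternating cycle certainly has no alternating $4$-cycle. The content is the converse, which I would prove by contradiction: suppose $T$ contains an alternating cycle but no alternating $4$-cycle, and fix an alternating cycle $C=(v_0,v_1,\dots,v_{2k-1})$, with $v_{2j}\xrightarrow{d}v_{2j+1}$ and $v_{2j+1}\xrightarrow{a}v_{2j+2}$ (indices mod $2k$), of \emph{minimum} length; by assumption $2k\geq 6$. I would then exhibit a strictly shorter alternating cycle, contradicting minimality.

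First I would record the elementary observation that, in an alternating cycle, each even-indexed vertex is larger than both of its two cycle-neighbours and each odd-indexed vertex is smaller than both; consequently the smallest of $v_0,\dots,v_{2k-1}$ sits at an odd index, and after re-reading the cyclic sequence starting at a suitable even index (which preserves the ascent/descent pattern) I may assume $v_1=\min\{v_0,\dots,v_{2k-1}\}$. Now I would look at the tournament edge joining $v_1$ and $v_4$; since $2k\geq 6$ this is a genuine chord of $C$. The crucial point is that, because $v_1$ is the global minimum of the cycle, this edge is an ascent if it is oriented $v_1\to v_4$ and a descent if it is oriented $v_4\to v_1$, whichever orientation actually occurs. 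If it is oriented $v_4\to v_1$, then $v_1\xrightarrow{a}v_2\xrightarrow{d}v_3\xrightarrow{a}v_4\xrightarrow{d}v_1$ is an alternating $4$-cycle, contradicting the hypothesis. If it is oriented $v_1\to v_4$, then $v_4\xrightarrow{d}v_5\xrightarrow{a}\cdots\xrightarrow{a}v_0\xrightarrow{d}v_1\xrightarrow{a}v_4$ is an alternating cycle on $2k-2$ vertices, contradicting minimality (or the hypothesis, if $2k-2=4$). Either way we are done; I would finish by verifying, via a routine parity check on the indices, that the two displayed closed walks really do alternate between ascents and descents.

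The only genuine obstacle is the choice of the chord. Cutting $C$ at an arbitrary vertex does not work, because one cannot control the orientation — hence the ascent/descent type — of the new edge, and so neither of the two resulting closed walks need be alternating. Anchoring the cut at the smallest vertex of the cycle is exactly what forces the new edge to respect the alternation pattern no matter how it is oriented, and this makes both cases go through simultaneously. Everything else (that $\{v_1,v_4\}$ is not already an edge of $C$, that the spliced walks have even length, and that the ascents and descents alternate along them) is a mechanical check.
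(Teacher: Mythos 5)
Your proof is correct: the reduction from a minimal alternating cycle of length $2k\ge 6$ to a shorter one by cutting at the chord $\{v_1,v_4\}$, with the cut anchored at the global minimum of the cycle so that the orientation of the chord automatically matches the required ascent/descent type in either case, is sound, and the parity checks on both spliced cycles work out. Note that the paper itself offers no proof to compare against --- it simply cites this as Corollary~2.3 of Hetyei's paper --- so your argument stands as a correct self-contained justification, and it is essentially the natural shortcutting argument one would expect for this statement.
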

An alternation acyclic tournament $T$ on $\{1,2,\ldots, n\}$ is called \emph{ascending} if every $i<n$ is the tail of an ascent, i.e., for each $i<n$ there is a $j>i$ such that $i\rightarrow j$. 
We present now a new proof for the following theorem.
\begin{theorem}\cite{Hetyei}\label{th:Hetyei}
The number of ascending alternation acyclic tournaments on $\{1,2,\ldots, n\}$ is the unsigned Genocchi number $|G_{2n}|$.
\end{theorem}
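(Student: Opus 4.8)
The plan is to reduce the statement to Theorem~\ref{th_genocchi}(b) by constructing a bijection between the ascending alternation acyclic tournaments on $\{1,\dots,n\}$ and the complete lonesum $0$-$1$ fillings of the staircase shape $S_{n-1}$; since, by that theorem, there are $|G_{2(n-1)+2}|=|G_{2n}|$ of the latter, this gives the result.

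First I would set up the encoding, which is just an ``upper-triangular adjacency matrix''. The $\binom n2$ cells of $S_{n-1}$ are in natural bijection with the pairs $\{i,j\}$, $1\le i<j\le n$: I place the cell for $\{i,j\}$ in the row indexed by the larger endpoint $j$ and the column indexed by the smaller endpoint $i$, ordering the rows so that $j=n$ is the bottom row and $j=2$ the top one, and the columns so that $i=1$ is the leftmost. One checks that the rows then have $n-1,n-2,\dots,1$ cells from bottom to top, so this really is $S_{n-1}$. For a tournament $T$ on $\{1,\dots,n\}$, let $M(T)$ be the filling whose entry in the cell for $\{i,j\}$ is $1$ if $T$ contains the ascent $i\to j$ and $0$ if $T$ contains the descent $j\to i$. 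Since edges correspond to cells and orientations to entries, $T\mapsto M(T)$ is a bijection from all tournaments on $\{1,\dots,n\}$ onto all $0$-$1$ fillings of $S_{n-1}$. Moreover the column of $M(T)$ indexed by $i\in\{1,\dots,n-1\}$ is non-zero exactly when $i\to j$ for some $j>i$, i.e.\ when $i$ is the tail of an ascent; hence $M(T)$ is complete if and only if $T$ is ascending.

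Next I would prove that $M(T)$ is lonesum if and only if $T$ is alternation acyclic. By Proposition~\ref{prop:Hetyei} it is enough to compare alternating $4$-cycles of $T$ with occurrences of the flipping pair $\mathcal F$ in $M(T)$. The structural heart is the observation that an alternating $4$-cycle $v_0\xrightarrow{d}v_1\xrightarrow{a}v_2\xrightarrow{d}v_3\xrightarrow{a}v_0$ forces $v_1$ and $v_3$ each to be smaller than both $v_0$ and $v_2$, so with $\{p<q\}:=\{v_1,v_3\}$ and $\{r<s\}:=\{v_0,v_2\}$ we have $p<q<r<s$, and the four edges of the cycle are precisely the ``cross'' pairs $\{p,r\},\{p,s\},\{q,r\},\{q,s\}$. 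The four cells carrying these pairs occupy rows $r,s$ and columns $p,q$, hence form a genuine $2\times2$ sub-configuration of $S_{n-1}$; and going around the unique $4$-cycle $p\,r\,q\,s$ of the underlying graph, the edge orientations alternate if and only if the corresponding $2\times2$ block of entries is $(\begin{smallmatrix}1&0\\0&1\end{smallmatrix})$ or $(\begin{smallmatrix}0&1\\1&0\end{smallmatrix})$, i.e.\ belongs to $\mathcal F$; note that membership in $\mathcal F$ is insensitive to the orderings of the two rows and the two columns and to transposition, so the placement conventions play no role here. Conversely, any occurrence of a submatrix of $\mathcal F$ in $M(T)$ lives on four pairs of this form and yields an alternating $4$-cycle of $T$. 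Thus $M(T)$ contains a submatrix from $\mathcal F$ iff $T$ has an alternating $4$-cycle, i.e.\ (by Proposition~\ref{prop:Hetyei}) iff $T$ is not alternation acyclic.

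Combining the last two paragraphs, $T\mapsto M(T)$ restricts to a bijection between the ascending alternation acyclic tournaments on $\{1,\dots,n\}$ and the complete lonesum fillings of $S_{n-1}$, and Theorem~\ref{th_genocchi}(b) completes the proof. I expect the only real work to be the structural analysis above -- verifying that the four edges of an alternating $4$-cycle always lie in one $2\times2$ block of the staircase and that the alternation condition matches exactly the flipping pattern; the remaining steps are bookkeeping about shapes and indexing conventions. (Should one prefer not to fix the placement so rigidly, one can record ascents/descents with either orientation and then reconcile ``complete'' with ``ascending'' using the self-conjugacy of $S_{n-1}$ and the transpose-invariance of the lonesum property.)
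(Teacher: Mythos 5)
Your proposal is correct and follows essentially the same route as the paper: encode a tournament as the upper-triangular $0$-$1$ filling of $S_{n-1}$ (ascent $=1$, descent $=0$), use Hetyei's $4$-cycle characterization (Proposition~\ref{prop:Hetyei}) to match alternation acyclicity with the lonesum condition and the ascending property with completeness, and then invoke Theorem~\ref{th_genocchi}(b). The only difference is that you spell out the structural verification that an alternating $4$-cycle occupies a genuine $2\times2$ block of the staircase and matches the flipping pair exactly, which the paper asserts without detail.
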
  

Our proof is based on the observation that the natural coding of an alternation acyclic tournament coincides with a lonesum filling of the staircase shape. 

We associate to a given tournament $T$ on $\{1,2,\ldots, n\}$ a $0$-$1$ filling of a staircase shape $S_{n-1}$ in the following natural way. Label the rows of the staircase shape $S_{n-1}$ by $\{2,3,\ldots, n \}$ from top to bottom, and the columns by $\{1,2,\ldots, n-1\}$ from left to right.
%See Figure TODO.
Each cell $(i,j)$ corresponds to an edge of the complete graph $K_n$ on $\{1,2,\ldots, n\}$. Fill the cell $(i,j)$ with $0$ whenever the edge is directed from $i$ to $j$ ($i\rightarrow j$), and with $1$ if the edge is directed otherwise ($i\leftarrow j$). Note that in this coding for every ascent we have a $1$ entry, and for every descent a $0$ in the filling of the staircase shape. Further, a pattern $
\left(\begin{smallmatrix} 1 & 0\\0 & 1 \end{smallmatrix}\right)$, as well as 
$\left(\begin{smallmatrix}0 & 1 \\ 1 & 0\end{smallmatrix}\right)$, corresponds to an alternating cycle of length four. Hence, by Proposition \ref{prop:Hetyei}, we have that the acyclic tournaments on $\{1,2,\ldots,n\}$ are in one-to-one correspondence with the lonesum fillings of the staircase shape $S_{n-1}$ by the above natural coding. Furthermore, the ascending property means in the terms of our coding that each column contains at least one $1$ entry. Hence, ascending acyclic tournaments are in bijection with complete lonesum fillings of staircase shape. We formulate these statements precisely in the next theorem. 
\begin{lemma}\label{th_lonesum-tournaments} Lonesum fillings of staircase shape $S_{n-1}$ are in bijection with alternation acyclic tournaments on $\{1,2,\ldots, n\}$ and complete lonesum fillings of staircase shape $S_{n-1}$ are in bijection with ascending alternation acyclic tournaments on $\{1,2,\ldots, n\}$. 
\end{lemma}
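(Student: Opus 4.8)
The plan is to turn the coding described just before the statement into a precise bijection and then verify, step by step, that it carries the relevant constraints across the two sides. Throughout I keep the labelling of the preceding paragraph: the rows of $S_{n-1}$ carry the labels $\{2,\dots,n\}$ from top to bottom and the columns carry $\{1,\dots,n-1\}$ from left to right, so a cell $(i,j)$ of $S_{n-1}$ is present exactly when $j<i$, and it corresponds to the edge $\{i,j\}$ of $K_n$.

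First I would pin down the map $\Phi$ explicitly: given a tournament $T$ on $\{1,\dots,n\}$, let $\Phi(T)$ be the $0$-$1$ filling of $S_{n-1}$ whose entry in cell $(i,j)$ is $0$ if $i\to j$ in $T$ and $1$ if $j\to i$. Since a tournament on $\{1,\dots,n\}$ is precisely a choice of an orientation for each of the $\binom{n}{2}$ edges of $K_n$, a $0$-$1$ filling of $S_{n-1}$ is precisely a choice of a bit for each of its $\binom{n}{2}$ cells, and the cell $(i,j)$ corresponds to the edge $\{i,j\}$, the map $\Phi$ is at once a bijection from the set of all tournaments on $\{1,\dots,n\}$ onto the set of all $0$-$1$ fillings of $S_{n-1}$. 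It then remains only to identify which tournaments land in which fillings.

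The central step is matching the forbidden configurations on the two sides. I would argue that a $2\times 2$ submatrix of $\Phi(T)$ occupies rows with labels $i_1<i_2$ and columns with labels $j_1<j_2$, and that all four of its cells lie in $S_{n-1}$ only if $j_2<i_1$; its four cells then correspond to the four edges $\{j_1,i_1\},\{j_2,i_1\},\{j_1,i_2\},\{j_2,i_2\}$, which form a $4$-cycle $j_1-i_1-j_2-i_2-j_1$ on the four distinct vertices $j_1<j_2<i_1<i_2$, in which the two smaller and the two larger vertices alternate. A short case check then shows that this submatrix equals $\left(\begin{smallmatrix}1&0\\0&1\end{smallmatrix}\right)$ or $\left(\begin{smallmatrix}0&1\\1&0\end{smallmatrix}\right)$ precisely when the orientations of these four edges in $T$ make them a directed $4$-cycle, and that any directed $4$-cycle on such a vertex set is automatically alternating. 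Conversely, every alternating $4$-cycle of $T$ has exactly this shape --- its four vertices split into a pair lying below a pair, with all four cross edges present --- so it is the cell-set of a unique such $2\times 2$ submatrix. Hence $\Phi(T)$ contains a pattern from $\mathcal{F}$ if and only if $T$ contains an alternating $4$-cycle, which by Proposition~\ref{prop:Hetyei} happens if and only if $T$ is not alternation acyclic. Combined with the bijectivity of $\Phi$, this yields a bijection between alternation acyclic tournaments on $\{1,\dots,n\}$ and lonesum fillings of $S_{n-1}$, proving the first claim.

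Finally, for the completeness/ascending correspondence I would note that a $1$ in column $j$ of $\Phi(T)$ sits in some row $i>j$ and records exactly the edge $\{i,j\}$ oriented as the ascent $j\to i$; thus column $j$ is non-zero if and only if $j$ is the tail of an ascent in $T$. So $\Phi(T)$ is complete if and only if $T$ is ascending, and restricting the bijection of the previous step gives the second claim. I do not expect any real obstacle: the only delicate point is the finite case analysis in the middle step --- pairing the two $\mathcal{F}$-patterns with the two traversal directions of the relevant $4$-cycle and checking that the staircase admits exactly those $2\times 2$ submatrices whose four cells form an alternating $4$-cycle --- and this becomes routine once the vertex ordering $j_1<j_2<i_1<i_2$ is fixed.
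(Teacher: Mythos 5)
Your proposal is correct and follows exactly the paper's approach: the paper's argument is precisely the "natural coding" paragraph preceding the lemma (cell $(i,j)$ records the orientation of edge $\{i,j\}$, the two forbidden $2\times 2$ patterns correspond to the two alternating $4$-cycles on $j_1<j_2<i_1<i_2$, Proposition~\ref{prop:Hetyei} reduces alternation acyclicity to the $4$-cycle condition, and non-zero columns correspond to tails of ascents). You have merely spelled out the case check and the shape constraint $j_2<i_1$ in more detail than the paper does, and all of those details check out.
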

\begin{proof}[Proof of Theorem \ref{th:Hetyei}]
Theorem \ref{th_genocchi}.b and Lemma \ref{th_lonesum-tournaments} imply the Theorem \ref{th:Hetyei}. 
\end{proof}
On the other hand Hetyei's result \cite{Hetyei} implies the following statement, and a new combinatorial interpretation of the median Genocchi numbers. 
\begin{theorem}\label{th:Hetyei2}
The number of lonesum fillings of the staircase shape $S_n$ is the median Genocchi number $H_{2n+1}$. 
\end{theorem}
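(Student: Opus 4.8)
The plan is to obtain Theorem~\ref{th:Hetyei2} by feeding the tournament coding of Lemma~\ref{th_lonesum-tournaments} into Hetyei's count of (not necessarily ascending) alternation acyclic tournaments.

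The first and essentially only step is to apply Lemma~\ref{th_lonesum-tournaments} with $n$ replaced by $n+1$. This produces a bijection between the lonesum fillings of the staircase shape $S_n$ and the alternation acyclic tournaments on the $(n+1)$-element vertex set $\{1,2,\dots,n+1\}$, so that $|\lonesum(S_n)|$ equals the number of such tournaments. One then invokes Hetyei's theorem from \cite{Hetyei} that the number of alternation acyclic tournaments on an $m$-element set is the median Genocchi number $H_{2m-1}$; specialising to $m=n+1$ gives $H_{2(n+1)-1}=H_{2n+1}$, which is the claim. (Combining this with Theorem~\ref{fotetel} applied to $\FL=S_n$ immediately gives the companion statement that the number of $\Gamma$-free fillings of $S_n$ is also $H_{2n+1}$, i.e.\ Corollary~\ref{cor:Hetyei2}.)

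Because the proof reduces to stringing two known results together, there is no real mathematical difficulty; the one thing that needs attention is index bookkeeping. I would explicitly check that the correspondence of Lemma~\ref{th_lonesum-tournaments} increases the number of vertices by one (so $S_n$ matches $(n+1)$-vertex tournaments) and that the median Genocchi number written $H_{2n+1}$ here is exactly the quantity in Hetyei's statement; a sanity check against small values, such as $|\lonesum(S_0)|=1=H_1$, $|\lonesum(S_1)|=2=H_3$ and $|\lonesum(S_2)|=8=H_5$, pins down the indexing.

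If one wished to avoid citing \cite{Hetyei}, an alternative would be to argue internally: decomposing $\lonesum(S_n)$ by the set $\mathcal{I}$ of non-zero columns as in Section~2 gives $|\lonesum(S_n)|=\sum_{\mathcal{I}}|\olonesum(S_n|_{\mathcal{I}})|=\sum_{\mathcal{I}}|\dumont(S_n|_{\mathcal{I}})|$ by Lemma~\ref{EWlemma}, and it would remain to identify this sum over sub\-configurations of the staircase with a standard formula for $H_{2n+1}$. I expect that identification to be the genuinely laborious part, so I would present it only as a remark and keep the short two-citation argument as the actual proof.
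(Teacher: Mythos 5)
Your proposal is correct and matches the paper's own proof: both combine the tournament coding of Lemma~\ref{th_lonesum-tournaments} (with the vertex set $\{1,\dots,n+1\}$ corresponding to $S_n$) with Hetyei's enumeration of alternation acyclic tournaments by median Genocchi numbers. The index bookkeeping you flag is exactly the only point requiring care, and you handle it correctly.
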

\begin{proof}
On one hand, using the natural coding above we have a bijection between lonesum fillings of the staircase shape and the alternation acyclic tournaments, on the other hand according to the theorem in \cite{Hetyei} the number of alternation acyclic tournaments on the set $\{1,2,\ldots, n+1\}$ is the median Genocchi number $H_{2n+1}$. Hence, the theorem follows. 
\end{proof}
Theorem~\ref{fotetel} directly implies the following result on the number of $\Gamma$-free fillings of $S_n$.
\begin{corollary}\label{cor:Hetyei2}
The number of $\Gamma$-free fillings of the staircase shape $S_n$ is the median Genocchi number $H_{2n+1}$. 
\end{corollary}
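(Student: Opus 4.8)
The plan is to deduce Corollary~\ref{cor:Hetyei2} immediately from Theorem~\ref{th:Hetyei2} by invoking Theorem~\ref{fotetel}. Indeed, Theorem~\ref{fotetel} asserts that $|\gfree(\FL)|=|\lonesum(\FL)|$ for every Ferrers shape $\FL$; applying this with $\FL=S_n$ yields that the number of $\Gamma$-free fillings of $S_n$ equals the number of lonesum fillings of $S_n$. Theorem~\ref{th:Hetyei2} identifies the latter quantity as the median Genocchi number $H_{2n+1}$, and the corollary follows at once.

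In more detail, the only thing to check is that the hypotheses of the two theorems genuinely apply to the staircase shape $S_n$, which is the Ferrers shape of the partition $(n,n-1,\dots,1)$ and hence a legitimate $\FL$ in the sense used throughout the paper. Thus I would simply write: by Theorem~\ref{fotetel}, $|\gfree(S_n)|=|\lonesum(S_n)|$, and by Theorem~\ref{th:Hetyei2}, $|\lonesum(S_n)|=H_{2n+1}$; combining these two equalities gives $|\gfree(S_n)|=H_{2n+1}$, as claimed. This is precisely the one-line argument already sketched in the excerpt (``Theorem~\ref{fotetel} directly implies the following result''), so the proof is genuinely a two-sentence chaining of prior results rather than anything requiring new ideas.

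There is essentially no obstacle here: the entire content has been pushed into Theorem~\ref{fotetel} (whose bijective proof occupies Section~2) and into Theorem~\ref{th:Hetyei2} (which rests on Hetyei's enumeration of alternation acyclic tournaments together with the natural coding of Lemma~\ref{th_lonesum-tournaments}). The corollary is purely a transport-of-structure statement: one number equals a second number, which in turn equals a known combinatorial constant. If anything is worth a word of care, it is only to note that the passage through lonesum fillings is necessary because the median Genocchi interpretation is natural on the lonesum side (via tournaments), whereas no direct combinatorial argument on the $\Gamma$-free side is being offered.

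\begin{proof}
By Theorem~\ref{fotetel} applied to the staircase shape $S_n$, the number of $\Gamma$-free $0$-$1$ fillings of $S_n$ equals the number of lonesum $0$-$1$ fillings of $S_n$. By Theorem~\ref{th:Hetyei2}, the latter number is the median Genocchi number $H_{2n+1}$. Combining these two equalities proves the corollary.
\end{proof}
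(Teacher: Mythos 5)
Your proposal is correct and is exactly the argument the paper intends: the corollary is obtained by chaining Theorem~\ref{fotetel} (applied to $S_n$) with Theorem~\ref{th:Hetyei2}, which is precisely what the phrase ``Theorem~\ref{fotetel} directly implies'' refers to. No further verification is needed beyond noting that $S_n$ is a Ferrers shape, which you did.
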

Furthermore, we have an interesting result on the set of lonesum fillings that contains in all row and in all column at least one $1$. 
\begin{theorem}
The number of lonesum fillings of the staircase shape $S_n$ such that each column and each row contains at least one $1$ is the median Genocchi number $H_{2n-1}$.
\end{theorem}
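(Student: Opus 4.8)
The plan is to push the statement through the permutation encoding $\zeta$ of Lemma~\ref{EWlemma} and then recognize the resulting class of permutations as a known avatar of the median Genocchi numbers.

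First I would specialize to the staircase. As recorded in the proof of Theorem~\ref{th_genocchi}, one has $L_r(S_n)=\{1,3,\dots,2n-1\}$, $L_c(S_n)=\{2,4,\dots,2n\}$, the semiperimeter of $S_n$ equals $2n$, and $\zeta$ is a bijection from the set of \emph{complete} lonesum fillings of $S_n$ onto the Dumont permutations of the first kind $\delta=d_1d_2\dots d_{2n+1}$ of Definition~\ref{def_genocchi}. Since ``all rows and all columns non-zero'' is the conjunction of ``complete'' with ``no all-zero row'', the restriction of $\zeta$ carries the set in the theorem bijectively onto those $\delta$ that are, in addition, the image of a diagram with no all-zero row. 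The first task is to make this extra requirement explicit on the permutation side. The reading procedure defining $\zeta$ alternately removes the all-$1$ rows and then the all-$0$ columns of the current diagram, starting with a row step, recording the canonical labels, and finally reverses; hence the leading entry $d_1$ of $\zeta(D)$ is exactly the label removed last. An all-zero row of $D$ is never an all-$1$ row while it is non-empty, so it can disappear only in a row step, and only after every column meeting it has already been removed; I expect this to force the \emph{last} step to be a row step, i.e.\ $d_1\in L_r(S_n)$, precisely when $D$ has an all-zero row (and the converse as well). So I anticipate the clean statement: $D$ has all rows and all columns non-zero if and only if $d_1$ is a \emph{descent top} of $\delta$, i.e.\ $d_1\in L_c(S_n)=\{2,4,\dots,2n\}$ --- a fact that checks out for $n=1,2,3$. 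The theorem thereby reduces to: the number of Dumont permutations of the first kind of length $2n+1$ whose first entry is a descent top equals $H_{2n-1}$.

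Second, I would prove this enumeration. By Corollary~\ref{cor:Hetyei2} (or by Theorem~\ref{th:Hetyei2}) the quantity $H_{2n-1}=H_{2(n-1)+1}$ is the number of $\Gamma$-free fillings of $S_{n-1}$, hence, by Lemma~\ref{kov2}.a, equals $|\callan(S_{n-1})|$; it is also, by Lemma~\ref{th_lonesum-tournaments} and Theorem~\ref{th:Hetyei2} applied to $S_{n-1}$, the number of alternation acyclic tournaments on $\{1,\dots,n\}$. I would construct a bijection from the class ``$d_1$ a descent top'' onto $\callan(S_{n-1})$ in the ``block-peeling'' style of the map $\nu$ of Lemma~\ref{keylemma}: because $d_1$ is a descent top, the first block of $\delta$ is a maximal initial decreasing run of even numbers, so case~(2.b) of that algorithm fires, and iterating the procedure yields a \emph{complete} $S_n$-Callan sequence $(R_1,C_1),\dots,(R_m,C_m)$. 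Now $2n-1\in L_r(S_n)$ sits in a unique block $R_j$, and $\max R_j=2n-1<\min C_j$ forces $C_j=\{2n\}$; removing $2n$ from $C_j$ and $2n-1$ from $R_j$ and relabeling should produce an $S_{n-1}$-Callan sequence --- complete when $R_j=\{2n-1\}$, and otherwise incomplete, with the residue $R_j\setminus\{2n-1\}$ reattached as a terminal block. Discarding exactly one even label and one odd label is what turns the ``complete'' objects (counted on their own by $|G_{2n+2}|$) into the unrestricted $S_{n-1}$-Callan sequences counted by $H_{2n-1}$. One then checks reversibility and the preservation of the three defining conditions of an $S_{n-1}$-Callan sequence.

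The hard part is these two bridging steps. The equivalence between ``no all-zero row'' and ``$d_1$ is a descent top'' is elementary but needs a patient analysis of the reading procedure in the definition of $\zeta$, or an equivalent argument via the characterization of lonesum diagrams by pairwise comparability of their columns; this is the most likely place for a subtlety to hide. The peeling bijection onto $\callan(S_{n-1})$ is delicate exactly when the block $R_j$ carrying $2n-1$ is not a singleton, since then the naive deletion leaves a block without a partner, and the reattachment rule must be arranged so that the map stays invertible and surjects onto \emph{all} $S_{n-1}$-Callan sequences rather than only the complete ones. Should this become unwieldy, one can avoid the explicit construction and instead match the class ``$d_1$ a descent top'' directly to the alternation acyclic tournaments on $\{1,\dots,n\}$ (counted by $H_{2n-1}$ as above), which keeps the argument within the combinatorial framework already set up in this paper.
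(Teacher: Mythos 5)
Your reduction hinges on the claim that, under the map $\zeta$ of Lemma~\ref{EWlemma}, a complete lonesum filling $D$ of $S_n$ has no all-zero row if and only if the first entry $d_1$ of $\zeta(D)$ lies in $L_c(S_n)$. Only the forward implication is true. If $D$ has no all-zero row, then the last label appended to $\alpha$ cannot indeed be a row label: a row removed as the very last item would have to be empty at that moment, yet its original $1$ in some column $c$ would keep $c$ from ever becoming $1$-free before that row is removed. The converse fails, because an all-zero row need not be the last object removed --- it only has to outlive the columns that \emph{meet} it, and in a staircase a high row meets few columns. Concretely, take $n=3$ and fill $S_3$ by putting $1$'s in the cells (row $1$, column $2$), (row $3$, column $4$), (row $3$, column $6$) and $0$'s elsewhere (canonical labels as in the proof of Theorem~\ref{th_genocchi}: rows $1,3,5$, columns $2,4,6$). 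This filling is lonesum and complete, and its top row (label $5$) is all-zero. Running $\zeta$: the first row pass removes row $3$; the first column pass removes columns $4$ and $6$; the second row pass removes rows $5$ and $1$ (in this order); the final pass removes column $2$. Hence $\alpha=3,4,6,5,1,2$ and $\zeta(D)=2,1,5,6,4,3,7$, whose first entry $2$ is a descent top even though $D$ has an all-zero row. So the class ``$d_1$ a descent top'' strictly contains the image of the fillings you want to count (for $n=3$ it has at least $9>H_5=8$ elements), and consequently your second bridging step --- that this class is enumerated by $H_{2n-1}$ --- cannot be correct either; the fallback of matching that class to alternation acyclic tournaments on $\{1,\dots,n\}$ fails for the same cardinality reason.

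For comparison, the paper does not use $\zeta$ here at all: it composes the Josuat-Verg\`es bijection between lonesum fillings and Le-tableaux (which preserves zero rows and zero columns) with the Steingr\'imsson--Williams bijection, under which all-zero rows correspond to \emph{fixed points}; the target set is thus the set of Dumont derangements, which Dumont showed are counted by the median Genocchi numbers. That fixed-point characterization is a special feature of the Steingr\'imsson--Williams map and does not transfer to $\zeta$ (note that your counterexample's image $2,1,5,6,4,3,7$ also has no fixed point before position $7$). If you want to stay inside the machinery of Section~2, you would first have to determine the correct --- and apparently less local --- description of the image under $\zeta$ (or under $\nu\circ\zeta$) of the fillings with no all-zero row; as it stands, that is the missing ingredient.
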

\begin{proof}
The well-known bijection of Steingr\'imsson and Williams \cite{Steingrimsson} between permutations and Le-tableaux with at least one $1$ in each column has the property that all-zero rows correspond to fixed points of the permutation, hence, Le-tableaux of staircase shapes with at least one $1$ in each row and column are in one-to-one correspondence with Dumont derangements, Dumont permutations without fixed points. It is known that these permutations are enumerated by the median Genocchi numbers \cite{Dumont}. On the other hand, Josuat-Verg\`es  \cite{Josuat} established a bijection between lonesum fillings and Le-tableaux of the same shape  that  sends zero-rows (respectively columns) to all-zero rows (respectively columns), so the statement follows.
\end{proof}
\begin{corollary} 
The number of alternation acyclic tournaments on $\{1,2,\ldots, n\}$ such that each $i<n$ is a tail of an ascent and each $i>1$ is the endpoint of an ascent, is the median Genocchi number $H_{2n-1}$. 
\end{corollary}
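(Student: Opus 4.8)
The plan is to reduce the statement to the classical enumeration of Dumont derangements by chaining two known bijections, exactly in the spirit of the arguments for Theorem~\ref{th:Hetyei2} and Corollary~\ref{cor:Hetyei2}.

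First I would invoke the Steingr\'imsson--Williams bijection between permutations and Le-tableaux having at least one $1$ in every column, specialized to the staircase shape $S_n$. As already noted in the Introduction, under this bijection the Le-tableaux of staircase shape with a $1$ in each column correspond to the Dumont permutations (the objects counted by $|G_{2n+2}|$). The feature of this map that I would exploit is the one recorded in \cite{Steingrimsson}: all-zero rows of the tableau correspond precisely to the fixed points of the permutation. Consequently, imposing in addition that every \emph{row} of the tableau contain a $1$ amounts to forbidding fixed points, so the Le-tableaux of staircase shape with a $1$ in every row and every column are in bijection with the fixed-point-free Dumont permutations, i.e.\ the Dumont derangements.

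Second I would apply Josuat-Verg\`es's bijection \cite{Josuat} between lonesum fillings of a Ferrers shape and Le-tableaux of the \emph{same} shape. The property needed here is that this bijection preserves which rows and which columns are non-zero (it sends all-zero rows to all-zero rows and all-zero columns to all-zero columns). Hence it restricts to a bijection between lonesum fillings of $S_n$ in which every row and every column is non-zero and Le-tableaux of $S_n$ with that same property. Composing with the bijection of the previous paragraph identifies the lonesum fillings of $S_n$ with all rows and columns non-zero with the Dumont derangements, and by a classical result of Dumont \cite{Dumont} these are enumerated by the median Genocchi number $H_{2n-1}$, which is the desired conclusion.

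The main obstacle, and essentially the only non-routine point, is pinning down the specialization of the Steingr\'imsson--Williams bijection to the staircase shape: one must check that the shape $S_n$ really produces permutations on the correct ground set whose ascent/descent sets match the defining condition for Dumont permutations, and that the fixed-point versus all-zero-row correspondence holds with the conventions used here (in particular, matching the canonical/semiperimeter labeling of $S_n$ used earlier in the paper against the row- and column-labelings in \cite{Steingrimsson}). Once that dictionary is in place, the rest is bookkeeping, since both bijections invoked are off-the-shelf.
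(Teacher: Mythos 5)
Your proposal does not actually prove the corollary: it proves the \emph{preceding theorem} (that the lonesum fillings of $S_n$ in which every row and every column is non-zero are counted by $H_{2n-1}$), reproducing essentially the paper's own argument for that theorem (Steingr\'imsson--Williams, the fixed-point/all-zero-row property, Josuat-Verg\`es, and Dumont's enumeration of Dumont derangements). The statement to be proved is about \emph{alternation acyclic tournaments}, and your write-up never mentions tournaments, ascents, or the coding of Section 4; your closing claim that the count of lonesum fillings ``is the desired conclusion'' conflates the two statements. The entire content of the corollary is the translation you omit: under the natural coding of Lemma~\ref{th_lonesum-tournaments}, a tournament on $\{1,\dots,n\}$ becomes a filling of $S_{n-1}$ (rows labelled $2,\dots,n$, columns labelled $1,\dots,n-1$) in which the cell $(i,j)$ carries a $1$ exactly when $j\to i$ with $j<i$, i.e.\ exactly when this edge is an ascent with tail $j$ and endpoint $i$. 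Hence ``every $i<n$ is the tail of an ascent'' translates to ``every column is non-zero'' and ``every $i>1$ is the endpoint of an ascent'' to ``every row is non-zero'', and the corollary is obtained by applying the preceding theorem through this dictionary. That dictionary is the proof, and it is absent from your proposal.

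Carrying out the translation is moreover not pure bookkeeping: it exposes an index shift that your argument never confronts. Tournaments on $\{1,\dots,n\}$ correspond to fillings of $S_{n-1}$, not of $S_n$, so the preceding theorem yields the count $H_{2(n-1)-1}=H_{2n-3}$ for the tournaments described in the corollary (equivalently, the count $H_{2n-1}$ belongs to tournaments on $\{1,\dots,n+1\}$). A check at $n=2$ confirms this: only the tournament $1\to 2$ satisfies both conditions, giving the count $1=H_1$, whereas $H_3=2$ in the paper's own convention (the preceding theorem counts the two admissible fillings of $S_2$ as $H_3$). So the corollary as printed appears to be off by one, and a proof that skips the passage from fillings to tournaments cannot detect, let alone resolve, this discrepancy.
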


\section{$0$-$1$ fillings of other types of shapes}
In this section we have started to investigate the fillings of other types of shapes.
The first natural question is whether the number of lonesum/$\Gamma$-free fillings varies if we replace the Ferrers shape $\FL$
with its horizontally or vertically  reflected copies
-- denoted by $\FL^{\leftrightarrow}$ and $\FL^{\updownarrow}$, respectively --, cf.\ Figure~\ref{fig_tukr}).\par
\begin{figure}[h]
\centering
\includegraphics[scale=1.1]{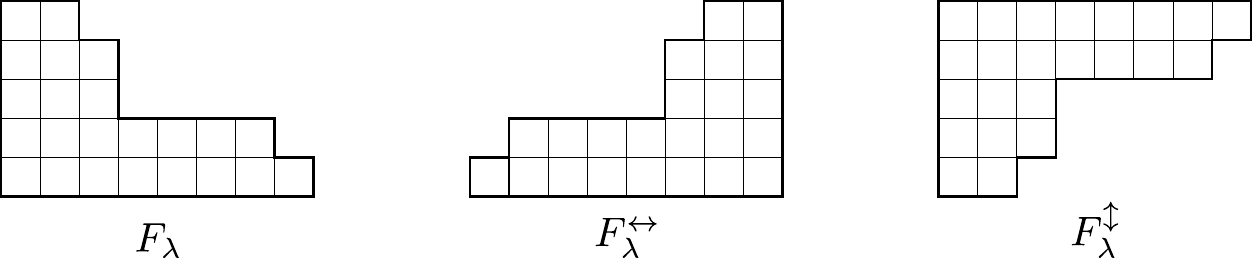}
\caption {A Ferrers shape and its reflected copies}\label{fig_tukr}
\end{figure}
By symmetry, it is immediate that $\FL^{\leftrightarrow}$ and $\FL^{\updownarrow}$ have the same number of
lonesum $0$-$1$ fillings as $\FL$ has, because 
both reflections establish a bijection between the lonesum \emph{diagrams} of shape $\FL$ and the lonesum diagrams of 
the reflected shape.\par
The above question is not so obvious for $\Gamma$-free fillings. For example, the staircase shape $S_2$ shows that
the number of $\Gamma$-free fillings of $\FL^{\updownarrow}$ can differ from $\FL$'s. However, it turns out
that $\FL^{\leftrightarrow}$ has the same number of $\Gamma$-free fillings as $\FL$ has. This follows from the following more general result.
\begin{theorem}
Let $\widetilde{\FL}$ denote an arbitrary shape obtained from the Ferrers shape $\FL$ by permuting its columns (see Figure~\ref{fig_perm} for an example).
Then $\widetilde{\FL}$ and $\FL$ have the same number of $\Gamma$-free $0$-$1$ fillings.
\end{theorem}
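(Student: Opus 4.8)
The plan is to reduce the statement, via the map $\phi$ of Lemma~\ref{folemma}, to a triviality about Callan sequences, so that permuting columns becomes nothing more than relabeling the ground set $\{1,\dots,k\}$.

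First I would pass to the circumscribed rectangle. Let $n$ be the number of rows and $k=\lambda_1$ the number of columns, so that $\FL$ and $\widetilde{\FL}$ both sit inside the $n\times k$ rectangle $\mathrm{Rect}(\FL)$, each column of either shape being a bottom-justified run of cells. Exactly as in the paragraph preceding Corollary~\ref{kov1}, a $\Gamma$-free filling of $\widetilde{\FL}$ is the same as a $\Gamma$-free filling of the $n\times k$ rectangle all of whose $1$'s lie inside $\widetilde{\FL}$, and this in turn is the same as a $\Gamma$-free filling of the rectangle all of whose \emph{top-$1$'s} lie inside $\widetilde{\FL}$ (a $1$ in column $j$ lies inside $\widetilde{\FL}$ precisely when it is no higher than the top-$1$ of that column). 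Nothing in this reduction uses that $\widetilde{\FL}$ is a genuine Ferrers shape; only bottom-justification of its columns is needed. The same applies to $\FL$.

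Next I would apply the bijection $\phi\colon\gfree(n\times k)\to\callan(n,k)$. Write $\tilde h_j$ for the height of column $j$ of $\widetilde{\FL}$ and $h_j$ for that of $\FL$. If $\phi$ sends $M$ to $(R_1,C_1),\dots,(R_m,C_m)$, then by property~(3) of Lemma~\ref{folemma} we have $\mathrm{top}(M)=\bigcup_{i=1}^m(\{\max R_i\}\times C_i)$, so ``every top-$1$ of $M$ lies inside $\widetilde{\FL}$'' says exactly that $\max R_i\le\tilde h_j$ for all $i$ and all $j\in C_i$. Hence $\phi$ restricts to a bijection between $\gfree(\widetilde{\FL})$ (identified with the relevant subset of $\gfree(n\times k)$) and the set $\mathcal{K}(\widetilde{\FL})$ of Callan sequences in $\callan(n,k)$ satisfying this inequality condition, and similarly between $\gfree(\FL)$ and the analogous set $\mathcal{K}(\FL)$ defined with $h$ in place of $\tilde h$.

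Finally I would match $\mathcal{K}(\widetilde{\FL})$ with $\mathcal{K}(\FL)$. Since $\widetilde{\FL}$ is obtained from $\FL$ by permuting columns, there is a permutation $\sigma$ of $\{1,\dots,k\}$ with $\tilde h_j=h_{\sigma(j)}$ for every $j$. The self-map of $\callan(n,k)$ sending $(R_1,C_1),\dots,(R_m,C_m)$ to $(R_1,\sigma(C_1)),\dots,(R_m,\sigma(C_m))$ is clearly a bijection, and $\max R_i\le\tilde h_j$ for all $j\in C_i$ holds if and only if $\max R_i\le h_{j'}$ for all $j'\in\sigma(C_i)$; so it carries $\mathcal{K}(\widetilde{\FL})$ bijectively onto $\mathcal{K}(\FL)$. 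Composing $\gfree(\widetilde{\FL})\leftrightarrow\mathcal{K}(\widetilde{\FL})\leftrightarrow\mathcal{K}(\FL)\leftrightarrow\gfree(\FL)$ then proves the theorem. Everything here is routine once Lemma~\ref{folemma} is in hand; the one step I would be careful about is the first reduction, namely verifying that the top-$1$ reformulation and the equivalence ``$\Gamma$-free as a rectangle filling $\iff$ $\Gamma$-free as a filling of the shape'' survive for the non-Ferrers shape $\widetilde{\FL}$, which they do simply because its columns are still bottom-justified.
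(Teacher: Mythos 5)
Your proof is correct, but it takes a genuinely different route from the paper's. The paper argues by induction on the number of rows: it fixes the $0$-$1$ vector filling the top (shortest) row, observes that $\Gamma$-freeness forces all $0$'s below every non-rightmost $1$ of that row, and notes that the remaining cells of $\FL$ and $\widetilde{\FL}$ again form two shapes obtained from one common Ferrers shape (with one fewer row) by permuting columns, so the induction hypothesis finishes the count row by row. You instead push the statement through the bijection $\phi$ of Lemma~\ref{folemma}: after checking that the reduction to ``$\Gamma$-free fillings of the circumscribed rectangle whose top-$1$'s lie in the shape'' (the step preceding Corollary~\ref{kov1}) uses only bottom-justification of the columns and hence survives for the non-Ferrers shape $\widetilde{\FL}$, the condition on the Callan-sequence side becomes $\max R_i\le h_j$ for all $j\in C_i$, which manifestly depends only on the multiset of column heights, so the column permutation $\sigma$ acts by relabelling the sets $C_i$. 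Both arguments are sound. Yours buys an explicit global bijection $\gfree(\widetilde{\FL})\to\gfree(\FL)$ in one stroke and makes transparent exactly which feature of the shape matters; the paper's induction is self-contained and elementary, needing none of the Callan-sequence machinery. The one point you rightly single out for care --- that the top-$1$ reformulation and the equivalence of ``$\Gamma$-free as a rectangle filling'' with ``$\Gamma$-free as a filling of the shape'' persist for $\widetilde{\FL}$ --- does hold, because the three-cell description of a $\Gamma$-configuration and the $2\times 2$-submatrix description coincide whenever every column is a bottom-justified run of cells.
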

\begin{figure}[h]
\centering
\includegraphics[scale=1.1]{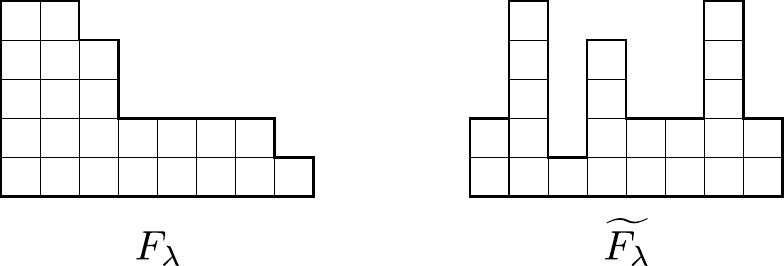}
\caption {Permuting the columns of a Ferrers shape}\label{fig_perm}
\end{figure}
\begin{proof}
First, we summarize some properties of $\widetilde{\FL}$ (which in fact characterize shapes that can be obtained from $\FL$ by permuting columns):
\begin{itemize}
\item $\FL$ and $\widetilde{\FL}$ have the same number of rows, say $k$;
\item the $i^{\text{th}}$ row has the same number of cells in $\widetilde{\FL}$ as in $\FL$, for all $i=1,\dots,k$;
\item the $i^{\text{th}}$ row (from top to bottom) is a subset of the $(i+1)^{\text{th}}$ row in both $\FL$ and $\widetilde{\FL}$
for all $i=1,\dots,k-1$, in the sense that for every cell of the $i^{\text{th}}$ row, there is also a cell in the same column in the $(i+1)^{\text{th}}$ row.
\end{itemize}
We will prove the theorem by induction on the number of rows. (The case $k=1$ is trivial.)
Let $s$ denote the (common) number of cells of the first row in the shapes $\FL$ and $\widetilde{\FL}$.
It is enough to see that for any fixed $0$-$1$ vector $(b_1,b_2,\dots,b_s)$, the number of those $\Gamma$-free $0$-$1$ fillings
of $\widetilde{\FL}$ in which the $j^{\text{th}}$ cell (from left to right) of the first row is filled with $b_j$ (for $j=1,\dots,s$) is the same as
the number of $\Gamma$-free fillings of $\FL$ with the same property.
This is true because such $\Gamma$-free fillings can be obtained for both $\widetilde{\FL}$ and $\FL$ as follows.
Fill the first row with the vector $(b_1,\dots,b_s)$, as required. The $\Gamma$-free property implies that there must be all $0$'s
below every non-rightmost $1$ of the first row (in the same column), so fill these cells with $0$'s.
It is straightforward to check that the remaining cells (i.e.\ those cells in the $2^{\text{nd}},\dots,k^{\text{th}}$ row that have not been filled with $0$)
can be filled with $0$'s and $1$'s arbitrarily with one condition: 
the $0$-$1$ filling of the remaining cells must be $\Gamma$-free (we do not consider the already filled cells here).
Since the two shapes formed by the remaining cells of $\widetilde{\FL}$ and $\FL$ can be obtained from the \emph{same} Ferrers shape (with $k-1$ rows)
by permuting its columns, the induction hypothesis applies, and hence the number of $\Gamma$-free fillings of the remaining cells is the same for both cases.
This concludes the proof.
\end{proof}
This paper just provides a starting point on the subject of enumerating lonesum/$\Gamma$-free fillings,
a number of interesting other shapes can be considered in future research. For example,
a possible next step could be the investigation of skew Ferrers shapes.

\section*{Second author's acknowledgements}

This research was supported by the following grants:
\begin{itemize}
\item Ministry for Innovation and Technology, Hungary grant TUDFO/47138-1/2019-ITM,
\item NKFIH Fund No.\ KH 129597,
\item EU-funded Hungarian grant EFOP-3.6.2-16-2017-00015.
\end{itemize}

\end{document}